\DeclareMathOperator\cardd{card}
\newcommand{\ZZ}{\mathbb Z}
\newcommand{\TT}{\mathbb T}
\newcommand{\pat}{\partial_t}
\newcommand{\pax}{\partial_x}
\newcommand{\pa}{\partial}
\newcommand{\na}{\nabla}
\newcommand{\etl}{e^{\sigma(t) \Lambda}}
\newcommand{\vertiii}[1]{{\left\vert\kern-0.25ex\left\vert\kern-0.25ex\left\vert #1 
		\right\vert\kern-0.25ex\right\vert\kern-0.25ex\right\vert}}
\newcommand{\norm}[1]{\lVert#1\rVert}
\newcommand{\normm}[1]{\bigl\lVert#1\bigr\rVert}
\newcommand{\ep}{\epsilon}
\newcounter{comentcount}
\newcounter{teocount}
\newtheorem{lem}{Lemma}
\newtheorem{corol}{Corollary}
\newtheorem{teo}[teocount]{Theorem}  
\newtheorem{defi}{Definition}
\newtheorem{remark}{Remark}
\title[]{On the dynamics of 3D electrified falling films}
\author[J. He]{Jiao He}
\author[R. Granero-Belinch\'{o}n]{Rafael Granero-Belinch\'{o}n}
\begin{document}
\begin{abstract}
	In this article, we consider a non-local variant of the Kuramoto-Sivashinsky equation in three dimensions (2D interface). Besides showing the global wellposedness of this equation we also obtain some qualitative properties of the solutions. In particular, we prove that the solutions become analytic in the spatial variable for positive time, the existence of a compact global attractor and an upper bound on the number of spatial oscillations of the solutions. We observe that such a bound is particularly interesting due to the chaotic behavior of the solutions.
\end{abstract}

\maketitle

\section{Introduction}
The present work is concerned with the full 3D dynamics of a thin fluid film falling along with a flat inclined plate. Besides gravitational effects, we consider the action of an electric field acting normal to the plate. In particular, for the case where the fluid lies on top of the plate (overlying films), the following equation was derived by Tomlin, Papageorgiou \& Pavliotis \cite{tomlin2017three}:
\begin{align}\label{KStwo}
\eta_{t} + \eta \eta_{x}+ (\beta-1) \eta_{xx}-\eta_{yy}- \gamma \Lambda^{3} \eta+\Delta^{2} \eta = 0
\end{align}
where  $\beta > 0$ is the Reynolds number, $0 \leq \gamma \leq 2$ measures the electric field strength and $\Lambda$ is a non-local operator corresponding to the electric field effect given on the Fourier variables as
$$
\widehat{\Lambda u }= \vert \mathbb{\xi} \vert \hat{u}(\xi) =  (\xi_{1}^{2}+\xi_{2}^{2})^{0.5}\hat{u}(\xi).$$ 
We observe that the term corresponding to the electric field, $- \gamma \Lambda^{3} (\eta),$ always has a destabilizing effect, while the term $(\beta-1) \eta_{xx}$ can be stabilizing or destabilizing depending on the value of the Reynolds number. Namely, for subcritical Reynolds numbers $0<\beta<1$, $(\beta-1) \eta_{xx}$ is a stabilizing term, while for supercritical Reynolds numbers $1<\beta$, it has a destabilizing effect.

Since falling films have received much attention from many authors, a wide variety of results about their nonlinear stability can be found. In particular, the 2D case (1D interface) was first studied by Gonz\'alez \& Castellanos \cite{gonzalez1996nonlinear}. These authors identified a critical electric field strength for sub-critical Reynolds number flows above which instability was found. Later on, Tseluiko $\&$ Papageorgiou also considered the 2D case (1D interface). In particular, Tseluiko \& Papageorgiou performed a numerical study of the 1D analog of \eqref{KStwo} and found attractors for the dynamics for certain values of the physical parameters \cite{tseluiko2006global}. The same authors provided analytical bounds on the energy of the solutions and the dimension of the attractor \cite{tseluiko2006wave} (see also \cite{tseluiko2010dynamics} for the case of vertical film flow). Compared with the case of 1D interface, equation \eqref{KStwo} generalizes previous works by taking transverse dynamics into consideration.

Equation \eqref{KStwo} is similar to the well-known Kuramoto-Sivashinsky equation in one dimension, 
\begin{align}\tag{KS}\label{KS}
\eta_{t} + \eta \eta_{x}= - \eta_{xx} - \eta_{xxxx}
\end{align}
which is a model appearing in several applications. For instance, LaQuey, Mahajan, Rutherford $\&$ Tang \cite{laquey1975nonlinear} obtained \eqref{KS} as a model of collisional trapped-ion mode in tokamak geometry (see also Cohen, Krommes, Tang $\&$ Rosenbluth \cite{cohen1976non}), Kuramoto $\&$ Tsuzuki \cite{kuramoto1976persistent} considered the possible instabilities of a two components reaction-diffusion system and also recovered \eqref{KS}. Furthermore, Sivashinsky \cite{sivashinsky1977nonlinear} (see also the companion paper by Michelson \& Sivashinsky \cite{michelson1977nonlinear}) derived \eqref{KS} as a model of the evolution of a disturbed plane flame front. Later on, Sivashinsky $\&$ Michelson \cite{sivashinsky1980irregular} linked \eqref{KS} to the evolution of a film of viscous liquid flowing down a vertical plane. Several equations sharing some similarities where obtained by Topper \& Kawahara \cite{topper1978approximate}, Lee and Chen \cite{lee1982nonlinear}, Coward \& Hall \cite{coward1993nonlinear}, Frenkel \& Indireshkumar \cite{frenkel1999wavy} and by James $\&$ Wilczek \cite{james2017vortex} when considering falling fluid films, plasma turbulence and cellular suspensions.

Equation \eqref{KS} has rich dynamics. Indeed, applying the Fourier transformation to the linear part of \eqref{KS},
$$\pat \hat{\eta}(\xi) = (\xi^{2}-\xi^{4}) \hat{\eta}(\xi), $$
it results in the stability of high frequencies ($|\xi| > 1$) and instability of low frequencies ($0 < |\xi|< 1$). Specifically, the term $\eta_{xx}$ leads to instability at large scales; the dissipative term $\eta_{xxxx}$ is responsible for damping at small scales. Then we see that for general initial data, the linear problem is unstable and leads to an exponential growth of certain frequencies. When the nonlinear term $\eta \eta_{x}$ is added, stabilization occurs as this term transfers energy from the long wavelengths to the short wavelengths and balances the exponential growth due to the linear parts. This interaction between the unstable linear parts and a nonlinearity who carries energy between frequencies makes the solution of \eqref{KS} to develop chaotic dynamics for certain values of the parameters.

This nonlinear stabilization of the Kuramoto-Sivashinsky equation with $L$-periodic boundary conditions,
$$\eta (x+ L, t)= \eta(x, t), \; \text{for all}\; x \;\text{and} \; t,$$
was considered mathematically by Nicolaenko, Scheurer $\&$ Temam in \cite{nicolaenko1985some} under the hypothesis that the initial data has odd symmetry: $\eta_0(x)=-\eta_0(-x)$. After that, Ilyashenko \cite{il1992global}, Collet, Eckmann, Epstein $\&$ Stubbe \cite{collet1993global} and Goodman  \cite{goodman1994stability} found new bounds for the $L^{2}$-norm of the solution of the KS equation without oddness condition for the initial data. The fact that the solutions are uniformly bounded leads us to the question of the optimal bound for the radius of the absorbing set in $L^2$ for arbitrarily large periods $L$. In that regards, the known bounds are
$$
\limsup_{t\rightarrow\infty}\left(\int_{0}^L u^2dx\right)^{0.5}\leq O(L^p)
$$
where the original $p=5/2$ \cite{nicolaenko1985some} was later improved to $p=8/5$ \cite{collet1993global}
and finally to $p=3/2$ \cite{bronski2006uncertainty}. The global bound has been upgraded recently by Giacomelli $\&$ Otto \cite{giacomelli2005new}, where they proved the bound
$$
\limsup_{t\rightarrow\infty}\left(\int_{0}^L u^2dx\right)^{0.5}\leq o(L^{1.5}).
$$
We observe that the conjectured bound is $O(L^{0.5})$.

The analyticity of solutions is of great interest not only for KS equation, but also for other nonlinear partial differential equations. For instance, we refer the reader to the seminar paper by Foias $\&$ Temam  \cite{foias1989gevrey} where they show that solutions of the Navier-Stokes equations are analytic in time with values in a Gevrey class of functions (in space). This technique has been extended largely to other nonlinear parabolic equations and, in particular, Collet, Eckmann, Epstein $\&$ Stubbe \cite{collet1993analyticity} addressed the spatial analyticity of solutions of one-dimensional Kuramoto-Sivashinsky equation. They showed that at large time the solutions are analytic in a strip around the real axis and also gave a rigorous lower bound for its width, i.e. the radius of analyticity is proportional to $L^{-16/25}$. Gruji{\'c} \cite{grujic2000spatial} used a Gevrey class technique to obtain a neighborhood in the global attractor of the set of all stationary solutions in which the radius of analyticity is independent of $L$. This latter result shed some light on a conjecture in \cite{collet1993analyticity} that asks whether there is a $\alpha>0$, independent of $L$, such that the solutions of the KS equation are analytic in space in the complex strip $\{x+is, s<\alpha\}$ for sufficiently large time. In higher dimensions, the literature on estimating the radius of analyticity for the Kuramoto-Sivashinsky-type equations is more scarce. For example, we refer to the works by Pinto \cite{pinto1999nonlinear, pinto2001analyticity} where, among other properties, the author studied the time analyticity of a variant of the two-dimensional KS equation. More recently, Ioakim $\&$ Smyrlis \cite{ioakim2014analyticity} also studied the analyticity properties of solutions of Kuramoto-Sivashinsky type equations and some related systems.

The goal of the present work is to mathematically study the initial value problem for nonlocal two-dimensional Kuramoto-Sivashinsky-type equation with periodic boundary conditions and initial data with zero mean
$$\int_{0}^{L} \int_{0}^{L} \eta_{0}(x,y) dx dy=0. $$
Of course, the zero average condition is propagated by the PDE. We organize this paper as follows. In section 2, we give some notations, definitions and classical results. In section 3, we show the global existence of solutions to initial value problem \eqref{KStwo} and in section 4, we prove the existence of an absorbing set in $L^{2}$ and in higher Sobolev norms. In section 5, we prove that these solutions are analytic in a strip based on \textit{a priori} estimates in a Gevrey class. Finally, in section 6, we establish a bound for the number of spatial oscillations which are a manifestation of the spatial chaos that this PDE evidences.
\subsection{Notation}
We will use $C$ to denote a universal constant that can change from one line to another.  We will make frequent use of the usual homogeneous Sobolev spaces $\dot{H}^{s}$:
$$ \dot{H}^{s}(\TT^{2}) = \big\{\eta \in L^{2}(\TT^{2}): \sum_{\xi \in \ZZ^{2}}  |\xi|^{2s} |\widehat{\eta}(\xi)|^{2} < \infty \big\}$$
where $\widehat{\eta}  (\xi)$ is the Fourier series of $\eta$ : 
\begin{equation*}
\widehat{\eta}(\xi) \stackrel{\text { def }}{=}(2 \pi)^{-2} \int_{\mathbb{T}^{2}} e^{-i \xi_1  x - i \xi_2 y} \eta(x, y) d x d y,\quad \xi = (\xi_1, \xi_2) \in \mathbb{Z}^{2}.
\end{equation*}

\section{Rescaling of the equation}
We assume that $\eta$ is $L$-periodic, 
$$
\eta(x+L,y)=\eta(x,y),\;\;
\eta(x,y+L)=\eta(x,y),
$$
and define $\TT^{2} = [0,2\pi]^{2}$ and $\lambda=\frac{2 \pi}{L}$. We rescale our variables according to
$$\tilde{x} = \lambda  x,\qquad  \tilde{y} = \lambda y , \qquad  \tilde{\eta} =\lambda^{-1}  \eta,\qquad \tilde{t}= \lambda^{2} t, $$
which gives 
\begin{align*}
\lambda^{3} \tilde{\eta_{t}} + \lambda^{3} \tilde{\eta} \tilde{\eta}_{x}+ (\beta-1) \lambda^{3} \tilde{\eta}_{xx}- \lambda^{3}\tilde{\eta}_{yy}- \gamma \lambda^{4} \Lambda^{3} (\tilde{\eta})+ \lambda^{5} \Delta^{2} \tilde{\eta} = 0.
\end{align*}
Then we obtain 
\begin{align}
 \tilde{\eta_{t}} + \tilde{\eta} \tilde{\eta}_{x}+ (\beta-1)  \tilde{\eta}_{xx}-\tilde{\eta}_{yy}- \gamma \lambda \Lambda^{3}(\tilde{\eta})+ \lambda^{2} \Delta^{2} \tilde{\eta} = 0.
\end{align}
Denoting $\delta = \gamma \lambda$ and $ \ep= \lambda^{2}$, we can equivalently consider the following initial-value problem
        \begin{equation}\label{KStwo-dimension}
\eta_{t}+ \eta \eta_{x}+ (\beta-1)  \eta_{xx}-\eta_{yy}- \delta \Lambda^{3}(\eta)+ \ep \Delta^{2} \eta = 0, \; (x,y)\in \TT^{2}, t >0
    \end{equation} 
    with initial data 
    $$\eta(x,y,0)  = \eta_{0}(x,y), \;  (x,y)\in \TT^{2}.$$
   
In what follows, we will drop the tilde notation.

\section{Global existence of strong solutions}
In this section, we will state the global well-posedness result of the initial-value problem \eqref{KStwo-dimension}:
\begin{teo}\label{globalexistence}
If $\eta_{0} \in H^{2}(\TT^{2})$, then for every $0<T< \infty $ the initial value problem \eqref{KStwo-dimension} has a unique solution
$$\eta \in C([0,T]; H^{2}(\TT^{2}))\cap L^2(0,T;H^4(\TT^{2})).$$ 
\end{teo}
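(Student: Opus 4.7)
The plan is a standard parabolic scheme: construct Galerkin approximations $\eta^N$ by Fourier truncation at $|\xi|\le N$, derive two a~priori estimates uniform in $N$ (in $L^2$ and in $H^2$), pass to the limit by Aubin--Lions compactness, and close uniqueness with a Gronwall estimate on the difference. The key structural observation is that the bilaplacian $\ep\Delta^2\eta$ is the highest-order term, and every other term---the destabilizing pieces $(\beta-1)\eta_{xx}$, $-\eta_{yy}$, $-\delta\Lambda^3\eta$, and the quadratic nonlinearity---can be absorbed into its dissipation via Young's inequality and interpolations adapted to dimension two.

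For the $L^2$ bound, I would test the equation against $\eta$. Periodicity kills the nonlinear term ($\int \eta^2\eta_x\,dx\,dy=0$), yielding
\begin{equation*}
\tfrac12\tfrac{d}{dt}\|\eta\|_{L^2}^2+\ep\|\Delta\eta\|_{L^2}^2=(\beta-1)\|\eta_x\|_{L^2}^2-\|\eta_y\|_{L^2}^2+\delta\|\Lambda^{3/2}\eta\|_{L^2}^2.
\end{equation*}
The interpolations $\|\eta_x\|_{L^2}^2\le \|\eta\|_{L^2}\|\Delta\eta\|_{L^2}$ and $\|\Lambda^{3/2}\eta\|_{L^2}^2\le \|\eta\|_{L^2}^{1/2}\|\Delta\eta\|_{L^2}^{3/2}$, combined with Young, absorb the right-hand side into $\tfrac{\ep}{2}\|\Delta\eta\|_{L^2}^2+C(\beta,\delta,\ep)\|\eta\|_{L^2}^2$, and Gronwall delivers a global bound in $L^\infty(0,T;L^2)\cap L^2(0,T;H^2)$.

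For the $H^2$ bound, I would test with $\Delta^2\eta$. After integration by parts, each linear contribution (of order at most three) is absorbed by the same interpolation/Young device into $\tfrac{\ep}{4}\|\Delta^2\eta\|_{L^2}^2+C(1+\|\eta\|_{L^2}^2)$. The crucial nonlinear contribution satisfies
\begin{equation*}
\left|\int \eta\eta_x\,\Delta^2\eta\,dx\,dy\right|\le \|\eta\|_{L^\infty}\|\eta_x\|_{L^2}\|\Delta^2\eta\|_{L^2},
\end{equation*}
and here the 2D Agmon inequality $\|\eta\|_{L^\infty}^2\le C\|\eta\|_{L^2}(\|\eta\|_{L^2}+\|\Delta\eta\|_{L^2})$ combined with the $L^2$ bound already obtained produces, for $y(t):=\|\Delta\eta(t)\|_{L^2}^2$, the differential inequality $y'\le C(1+y)$. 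Gronwall then yields uniform $H^2$ bounds on $\eta^N$; since the equation forces $\partial_t\eta^N\in L^2(0,T;L^2)$, Aubin--Lions gives strong convergence in $L^2(0,T;H^s)$ for any $s<4$, which is enough to identify the limit of $\eta^N\eta^N_x$.

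Uniqueness follows from a standard energy estimate on $w=\eta_1-\eta_2$: the only delicate term, $\tfrac14\int(\eta_1+\eta_2)_x w^2\,dx\,dy$, is controlled by Ladyzhenskaya's inequality $\|w\|_{L^4}^2\le C\|w\|_{L^2}\|\nabla w\|_{L^2}$ and absorbed into $\tfrac{\ep}{2}\|\Delta w\|_{L^2}^2$, leaving a Gronwall coefficient involving $\|\partial_x\eta_i\|_{L^2}$, which is $L^\infty$ in time by the $H^2$ estimate. The hard part throughout is the closure of the $H^2$ estimate: to avoid a Riccati-type blow-up the right-hand side must be at most linear in $y$, and this matches exactly the exponents produced by Agmon and Gagliardo--Nirenberg in two space dimensions---the same scheme without modification would fail in 3D.
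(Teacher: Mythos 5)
Your proposal is correct and follows the same overall strategy as the paper: \emph{a priori} energy estimates at the $L^2$ and $H^2$ levels in which every linear and nonlinear term is absorbed into the dissipation $\ep\norm{\Delta^2\eta}_{L^2}^2$ via interpolation and Young, an approximation scheme made rigorous by compactness, and uniqueness by a Gronwall estimate on the difference. The two genuine differences are minor but worth recording. First, you build approximations by Fourier--Galerkin truncation and invoke Aubin--Lions, whereas the paper mollifies the equation with a kernel $J_{\ep'}$ and uses Picard's theorem; both are standard and the energy identities survive either truncation (for Galerkin, testing against $\eta^N$ and $\Delta^2\eta^N$ keeps you inside the finite-dimensional space, so the projections drop out and $\int (\eta^N)^2\eta^N_x=0$ still holds). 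Second, and more substantively, you close the $H^2$ estimate in one step by combining Agmon's inequality $\norm{\eta}_{L^\infty}^2\leq C\norm{\eta}_{L^2}\norm{\eta}_{H^2}$ with the already-established uniform $L^2$ bound, which yields the linear differential inequality $y'\leq C(1+y)$ for $y=\norm{\Delta\eta}_{L^2}^2$ and hence a single-exponential bound; the paper instead routes through an intermediate $H^1$ estimate and controls $\int_0^t\norm{\eta}_{L^\infty}^2\,ds$ by the $L^2(0,T;H^2)$ dissipation, at the price of a double-exponential constant. Your exponent bookkeeping is right and this is a genuine (if modest) quantitative improvement. The only point you leave implicit is the endpoint continuity $\eta\in C([0,T];H^2)$: it does follow from what you prove, since $\eta\in L^2(0,T;H^4)$ together with $\partial_t\eta\in L^2(0,T;L^2)$ gives continuity into the interpolation space $H^2$ by the Lions--Magenes lemma, but you should say so (the paper devotes its Step 6 to this).
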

\begin{proof}[Proof]
\textbf{Step 1 : $L^{2}$ estimate.} We multiply \eqref{KStwo-dimension} by $\eta$ and integrate by parts to obtain
\begin{align*}
\frac{1}{2}\frac{d}{dt} \norm{\eta}^{2}_{L^{2}}
&= (\beta-1) \norm{\eta_{x}}^{2}_{L^{2}} - \norm{\eta_{y}}_{L^{2}}^{2} + \delta \norm{\Lambda^{\frac{3}{2}} \eta}_{L^2}^2 - \ep \norm{ \Delta\eta}^{2}_{L^{2}}\\
& \leq C(\beta,\delta) \norm{\eta}_{H^{\frac{3}{2}}}^2 - \ep \norm{\eta}^{2}_{H^{2}}\\
& \leq C(\beta,\delta) \norm{\eta}_{H^2}^{3/2} \norm{\eta}_{L^2}^{1/2} - \ep \norm{\eta}^{2}_{H^{2}}
\end{align*}
By Young's inequality, we find that 
$$
\frac{d}{dt} \norm{\eta}^{2}_{L^{2}}
\leq -\ep \norm{\Delta \eta}^{2}_{L^{2}} + C(\ep, \beta,\delta)
 \norm{\eta}^{2}_{L^{2}}
$$
where $C(\ep, \beta,\delta)$ is a constant depending on $\ep, \beta, \delta$ and may change line by line. 
An application of Gronwall's inequality leads us to
\begin{equation*}
\norm{\eta}_{L^{2}}^{2} + \ep \int_{0}^{t} \exp \left(  C(\ep, \beta,\delta)\left(t-s\right) \right) \norm{ \Delta \eta}^{2}_{L^{2}} ds 
\leq 
  C(\ep, \beta,\delta) \norm{\eta_{0}}_{L^{2}}^{2} e^t.
\end{equation*}
Hence,
\begin{equation*}
\norm{\eta}_{L^{2}}^{2} + \ep \int_{0}^{t} \norm{\eta}^{2}_{H^{2}} ds 
\leq 
  C(\ep, \beta,\delta) \norm{\eta_{0}}_{L^{2}}^{2} e^t.
\end{equation*}

\textbf{Step 2 : $H^{1}$ estimate.} Now we multiply \eqref{KStwo-dimension} by $- \Delta \eta$ and integrate by parts to obtain that
\begin{align*}
\frac{1}{2}\frac{d}{dt} \norm{\eta}^{2}_{H^{1}} 
&= (\beta-1) \norm{\eta_{x}}^{2}_{H^{1}} - \norm{\eta_{y}}_{H^{1}}^{2} + \delta \norm{\Lambda^{\frac{3}{2}}\eta}^{2}_{H^{1}} - \ep \norm{\Delta \eta}^{2}_{H^{1}}+ \norm{\eta}_{L^{\infty}} \norm{\eta}_{H^{1}} \norm{\Delta \eta}_{L^{2}}.
\end{align*}
Using the same method as in step 1, we get
\begin{align*}
\frac{1}{2}\frac{d}{dt} \norm{\eta}^{2}_{H^{1}} 
& \leq -\frac{3\ep}{4} \norm{\Delta \eta}^{2}_{H^{1}} +   C(\ep, \beta,\delta)\norm{\eta}^{2}_{H^{1}} + \frac{1}{2\ep}  \norm{\eta}^{2}_{L^{\infty}} \norm{\eta}^{2}_{H^{1}} + \frac{\ep}{2} \norm{\Delta \eta}^{2}_{L^{2}}\\
& \leq -\frac{\ep}{4} \norm{\Delta \eta}^{2}_{H^{1}}+
\left( C(\ep, \beta,\delta) + \frac{1}{2\ep}  \norm{\eta}^{2}_{L^{\infty}} \right) \norm{\eta}^{2}_{H^{1}},
\end{align*}
which implies
\begin{align*}
\frac{d}{dt} \norm{\eta}^{2}_{H^{1}} 
& \leq -\frac{\ep}{2} \norm{\Delta \eta}^{2}_{H^{1}}+ \left(  C(\ep, \beta,\delta) +\frac{1}{\ep}  \norm{\eta}^{2}_{L^{\infty}} \right) \norm{\eta}^{2}_{H^{1}}.
\end{align*}
Using Gronwall's inequality, we find that
\begin{align*}
\norm{\eta}_{H^{1}}^{2} 
&+ \frac{\ep}{2} \int_{0}^{t} \exp \left(  C(\ep, \beta,\delta) (t-s) + \int_{s}^{t} \frac{1}{\ep} \norm{\eta}^{2}_{L^{\infty}} \right) \norm{ \Delta \eta}^{2}_{H^{1}} ds \\
&\hskip 6cm \leq 
\norm{\eta_{0}}_{H^{1}}^{2} \exp\left(  C(\ep, \beta,\delta) t +  \int_{0}^{t} \frac{1}{\ep} \norm{\eta}^{2}_{L^{\infty}} \right) .
\end{align*}
From step 1, we already have that
$$\ep \int_{0}^{t} \norm{\eta}^{2}_{H^{2}} ds 
\leq 
C(\ep, \beta,\delta) \norm{\eta_{0}}_{L^{2}}^{2} e^t.$$ 
Using the Sobolev embedding, we get that 
$$\ep \int_{0}^{t} \norm{\eta}^{2}_{L^{\infty}} ds 
\leq 
C(\ep, \beta,\delta) \norm{\eta_{0}}_{L^{2}}^{2} e^t.$$
Inserting this into the inequality above, we obtain 
\begin{align*}
\norm{\eta}_{H^{1}}^{2} 
&+ \frac{\ep}{2} \int_{0}^{t} \exp \left(C(\ep, \beta,\delta) (t-s) + \int_{s}^{t} \frac{1}{\ep} \norm{\eta}^{2}_{L^{\infty}} \right) \norm{ \Delta \eta}^{2}_{H^{1}} ds \\
&\leq 
\norm{\eta_{0}}_{H^{1}}^{2} \exp\left( C(\ep, \beta,\delta)  t + \frac{1}{\ep^{2}} \norm{\eta_{0}}_{L^{2}}^{2} \exp\left(C(\ep, \beta,\delta)  t \right) \right) \\
&\leq \norm{\eta_{0}}_{H^{1}}^{2} \exp \left(\exp \left( C(\ep, \beta, \delta \norm{\eta_{0}}_{H^{1}} )t\right) \right) .
\end{align*}
Hence, we conclude that 
\begin{multline*}
\norm{\eta}_{H^{1}}^{2} 
+ \frac{\ep}{2} \int_{0}^{t} \exp  \left(\exp  \left(    C(\ep, \beta, \delta \norm{\eta_{0}}_{H^{1}} )(t-s)\right) \right) \norm{ \Delta \eta}^{2}_{H^{1}} ds \\
 \leq \norm{\eta_{0}}_{H^{1}}^{2} \exp \left(\exp \left( C(\ep, \beta, \delta \norm{\eta_{0}}_{H^{1}} )t\right) \right).
\end{multline*}
In particular, 
\begin{align*}
\norm{\eta}_{H^{1}}^{2} 
+ \frac{\ep}{2} \int_{0}^{t} \norm{ \Delta \eta}^{2}_{H^{1}} ds  \leq \norm{\eta_{0}}_{H^{1}}^{2} \exp \left(\exp \left( C(\ep, \beta, \delta \norm{\eta_{0}}_{H^{1}} )t\right) \right).
\end{align*}
\textbf{Step 3 : $H^{2}$ estimate.} We multiply \eqref{KStwo-dimension} by $\Delta^{2} \eta$ and integrate by parts to obtain that
\begin{align*}
\frac{1}{2}\frac{d}{dt} \norm{\eta}^{2}_{H^{2}} 
&= (\beta-1) \norm{\eta_{x}}^{2}_{H^{2}} - \norm{\eta_{y}}_{H^{2}}^{2} + \delta \norm{\Lambda^{\frac{3}{2}}\eta}^{2}_{H^{2}} - \ep \norm{\Delta \eta}^{2}_{H^{2}}+ \norm{\eta}_{L^{\infty}} \norm{\eta}_{H^{1}} \norm{\Delta^{2}\eta}_{L^{2}}.
\end{align*}
Using the same method as in step 1 and step 2, we can obtain 
\begin{align*}
\frac{d}{dt} \norm{\eta}^{2}_{H^{2}} & \leq -\frac{\ep}{2} \norm{\Delta \eta}^{2}_{H^{2}}+ \left(C(\ep, \beta,\delta) +\frac{1}{\ep}  \norm{\eta}^{2}_{L^{\infty}} \right)\norm{\eta}^{2}_{H^{1}}\\
& \leq -\frac{\ep}{2} \norm{\Delta \eta}^{2}_{H^{2}}+ \left(C(\ep, \beta,\delta) +\frac{1}{\ep}  \norm{\eta}^{2}_{L^{\infty}} \right)\norm{\eta}^{2}_{H^{2}},
\end{align*}
Using Gronwall's inequality again, we obtain that 
\begin{multline*}
\norm{\eta}_{H^{2}}^{2} 
+ \frac{\ep}{2} \int_{0}^{t} \exp \left( C(\ep, \beta,\delta) (t-s) + \int_{s}^{t} \frac{1}{\ep} \norm{\eta}^{2}_{L^{\infty}} \right) \norm{ \Delta \eta}^{2}_{H^{2}} ds \\
\leq 
\norm{\eta_{0}}_{H^{2}}^{2} \exp\left(C(\ep, \beta,\delta) t +  \int_{0}^{t} \frac{1}{\ep} \norm{\eta}^{2}_{L^{\infty}} \right).
\end{multline*}
Then, the Sobolev embedding implies that
\begin{multline*}
\norm{\eta}_{H^{2}}^{2} 
+ \frac{\ep}{2} \int_{0}^{t} \exp \left( \exp \left(   C(\ep, \beta, \delta, \norm{\eta_{0}}_{H^{2}} )(t-s)\right) \right) \norm{ \Delta \eta}^{2}_{H^{2}} ds \\
 \leq \norm{\eta_{0}}_{H^{2}}^{2} \exp \left( \exp \left(  C( \ep, \beta, \delta, \norm{\eta_{0}}_{H^{2}} )t\right) \right) .
\end{multline*}
Finally, we have 
\begin{align*}
\norm{\eta}_{H^{2}}^{2} 
+ \frac{\ep}{2} \int_{0}^{t} \norm{ \Delta \eta}^{2}_{H^{2}} ds  \leq \norm{\eta_{0}}_{H^{2}}^{2} \exp \left( \exp \left( C(\ep, \beta, \delta, \norm{\eta_{0}}_{H^{2}} )t\right) \right).
\end{align*}

\textbf{Step 4 : Existence of solution} We consider a positive, symmetric mollifier $J_{\ep'}$ (such as the periodic heat kernel), to approximate the initial value problem \eqref{KStwo-dimension} by the regularized problem
\begin{multline*}
\pa_{t} \eta_{\ep'}
+ J_{\ep'} * \frac{\pa_{x} \left(\left(J_{\ep'}*\eta_{\ep'}\right)^{2}\right)}{2}\\
 =J_{\ep'} * \left( (1-\beta)  \pa_{xx} (J_{\ep'}*\eta_{\ep'}) + \pa_{yy} (J_{\ep'}*\eta_{\ep'}) + \delta \Lambda^{3} (J_{\ep'}*\eta_{\ep'})- \ep \Delta^{2} (J_{\ep'}*\eta_{\ep'}) \right),
\end{multline*}
with initial data 
$$\eta_{\ep'}(0)=J_{\ep'}*\eta_{0} .$$

By the Picard's theorem, these sequence of regularized problems have a unique solution $\eta_{\ep'} = C^1 ([0,T_{\ep'}], H^{2}(\TT^{2}))$. Moreover, these problems verify the same energy estimates as in steps 1-3 and, as a consequence, we can take $T = T(\eta_{0})$ independent of $\ep'$. Passing to the limit we conclude the existence of at least one solution in 
$$
\eta\in L^\infty(0,T;H^2)\cap L^2(0,T;H^4)
$$
and since we have \textit{a priori} estimates, these solutions exist for arbitrary long time $T$.  

\textbf{Step 5 : Uniqueness.} 
We can prove the uniqueness of the solutions by contradiction, i.e. assuming that there exists two solutions of the problem \eqref{KStwo-dimension}, $\eta_{1}$ and $\eta_{2}$, corresponding to the same initial data $\eta_{0}$. We denote their difference by $\underline{\eta}$. Then we have 
\begin{equation}\label{KS-difference}
\underline{\eta}_{t} + \frac{1}{2} \left(\eta_{1}^{2}-\eta_{2}^{2}\right)_{x} + (\beta-1)  \underline{\eta}_{xx}-\underline{\eta}_{yy}- \delta \Lambda^{3}(\underline{\eta})+ \ep \Delta^{2} \underline{\eta} = 0. 
\end{equation}
As the proof of $L^{2}$ estimate in step 1, we multiply \eqref{KS-difference} by $\underline{\eta}$ and integrate by parts: 
\begin{align*}
\frac{1}{2} \frac{d}{dt} \norm{\underline{\eta}}^{2}_{L^{2}} 
&=  
(\beta-1) \norm{\underline{\eta}_{x}}^{2}_{L^{2}} - \norm{\underline{\eta}_{y}}_{L^{2}}^{2} + \delta \norm{\Lambda^{\frac{3}{2}}(\underline{\eta})}_{L^{2}} - \ep \norm{\Delta \underline{\eta}}^{2}_{L^{2}}- \int_{\TT^{2}} \frac{1}{2}  (\eta_{1}^{2}-\eta_{2}^{2})_{x} \underline{\eta} \\
& \leq C(\ep, \beta,\delta) \norm{\underline{\eta}}^{2}_{L^{2}} -\frac{\ep}{4} \norm{\Delta \underline{\eta}}^{2}_{L^{2}} + \frac{1}{2} \norm{\eta_{1}+\eta_{2}}_{L^{\infty}}  \norm{\underline{\eta}}_{H^{1}} 
\norm{\underline{\eta}}_{L^{2}}  \\
&\leq C(\ep, \beta,\delta)\norm{\underline{\eta}}^{2}_{L^{2}} -\frac{\ep}{4} \norm{\Delta \underline{\eta}}^{2}_{L^{2}} + \frac{\ep}{4}  \norm{\underline{\eta}}_{H^{1}}^{2} + \frac{1}{4 \ep}
\norm{\eta_{1}+\eta_{2}}_{L^{\infty}}^{2}  \norm{\underline{\eta}}_{L^{2}}^{2}\\
&\leq  \left( C(\ep, \beta,\delta) + \frac{1}{4 \ep} \norm{\eta_{1}+\eta_{2}}_{L^{\infty}}^{2} \right) \norm{\underline{\eta}}^{2}_{L^{2}} \\
&\leq \left(C(\ep, \beta,\delta) + \frac{1}{2 \ep} \left( \norm{\eta_{1}}_{L^{\infty}}^{2}+ \norm{\eta_{2}}_{L^{\infty}}^{2} \right) \right) \norm{\underline{\eta}}^{2}_{L^{2}} 
\end{align*}
Using Gronwall's inequality, we have that
\begin{align*}
\norm{\underline{\eta}}^{2}_{L^{2}} 
\leq \norm{\underline{\eta}_{0}}^{2}_{L^{2}} \exp \left( C(\ep, \beta,\delta) t + \frac{1}{ \ep} \int_{0}^{t} \norm{\eta_{1}}_{L^{\infty}}^{2}+ \norm{\eta_{2}}_{L^{\infty}}^{2} \right) .
\end{align*}
From step 1, we already have that
$$\ep \int_{0}^{t} \norm{\eta_{1}}^{2}_{H^{2}} ds 
\leq 
\norm{\eta_{0}}_{L^{2}}^{2} \exp\left(  C(\ep, \beta,\delta)t \right),$$
and 
$$\ep \int_{0}^{t} \norm{\eta_{2}}^{2}_{H^{2}} ds 
\leq 
\norm{\eta_{0}}_{L^{2}}^{2} \exp\left(  C(\ep, \beta,\delta)t \right).$$
Thus, the uniqueness of solution follows from the inequality
\begin{align*}
\norm{\underline{\eta}}^{2}_{L^{2}} 
\leq \norm{\underline{\eta}_{0}}^{2}_{L^{2}} \exp \left(\exp \left( C(\delta, \ep, \beta, \norm{\eta_{0}}_{L^{2}} )t\right) \right).
\end{align*}

\textbf{Step 6 : Endpoint continuity in time.}
To conclude the endpoint continuity, we can perform a standard argument using the parabolic gain of regularity $L^2(0,T;H^4)$. Indeed, we can take $0<\sigma\ll 1$ as small as desired and there exists a $0<\sigma'<\sigma$ such that $u(\sigma')\in H^4$. Repeating the same argument as before, we find a solution
$$
\eta_\sigma\in L^\infty(\sigma',T;H^4)\cap C([\sigma',T],H^2).
$$
Because of the uniqueness of solution we obtain the continuity of the original solution
$$
\eta\in C((0,T],H^2).
$$
Finally, the continuity at the origin is a consequence of the energy estimates.
\end{proof}

\section{Large time dynamics}
The goal of this section is to prove uniform boundedness of solutions $\eta \in L^{\infty}([0, \infty); L^{2}(\TT^{2}))$. In other words, we establish the existence of an absorbing ball in $L^{2}$ by collecting global bounds showing the dissipative character of the equation. We start by proving the following two Gagliardo-Nirenberg inequalities
\begin{lem}{\label{lemmatempor}}For smooth enough periodic functions with zero mean, we have that the following two inequalities hold true
$$\norm{\na \eta}_{L^{4}(\TT^{2})}^{2} \leq C \norm{\eta}_{L^\infty(\TT^{2})} \norm{ \Delta \eta}_{L^{2}(\TT^{2})},$$
$$\norm{\Delta \eta^{2}}_{L^{2}(\TT^{2})} \leq C \norm{\eta}_{L^{\infty}(\TT^{2})} \norm{\Delta \eta }_{L^{2}(\TT^{2})}.$$\end{lem}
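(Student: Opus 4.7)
The plan is to prove both inequalities by integration by parts, treating the first as the main estimate and deriving the second as a consequence through the Leibniz rule for $\Delta(\eta^2)$.

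For the first inequality, I would start from the identity
\begin{equation*}
\int_{\TT^2} |\nabla \eta|^4\, dx = \int_{\TT^2} (\partial_i \eta\, \partial_i \eta)(\partial_j \eta\, \partial_j \eta)\, dx
\end{equation*}
(with summation over repeated indices) and integrate by parts on one copy of $\partial_j \eta$, which is permissible since $\eta$ is smooth and periodic. This produces
\begin{equation*}
\int_{\TT^2} |\nabla \eta|^4\, dx = -\int_{\TT^2} \eta\Bigl[ 2\, \partial_i\eta\,\partial_j\eta\,\partial_i\partial_j\eta + |\nabla \eta|^2 \Delta \eta\Bigr]\, dx.
\end{equation*}
Cauchy--Schwarz in the index pair $(i,j)$ gives $|\partial_i\eta\,\partial_j\eta\,\partial_i\partial_j\eta| \leq |\nabla \eta|^2 |D^2 \eta|$, so H\"older in $L^\infty\cdot L^2\cdot L^2$ yields
\begin{equation*}
\int_{\TT^2} |\nabla \eta|^4\, dx \leq C\,\norm{\eta}_{L^\infty}\,\norm{\nabla\eta}_{L^4}^2 \bigl(\norm{D^2\eta}_{L^2}+\norm{\Delta\eta}_{L^2}\bigr).
\end{equation*}
On the torus the familiar identity $\norm{D^2\eta}_{L^2}^2=\norm{\Delta\eta}_{L^2}^2$ (obtained by one further integration by parts, $\sum_{i,j}\int (\partial_i\partial_j\eta)^2 = -\sum_j\int \partial_j\eta\,\partial_j\Delta\eta=\int(\Delta\eta)^2$) lets us replace $\norm{D^2\eta}_{L^2}$ by $\norm{\Delta\eta}_{L^2}$. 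Dividing by $\norm{\nabla \eta}_{L^4}^2$ (assuming it is nonzero; the inequality is trivial otherwise) gives the first inequality.

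For the second inequality, I would simply use the pointwise Leibniz identity
\begin{equation*}
\Delta(\eta^2) = 2\,\eta\,\Delta\eta + 2\,|\nabla\eta|^2,
\end{equation*}
apply the triangle inequality in $L^2$, and bound each piece. The first term is controlled by H\"older: $\norm{\eta\,\Delta\eta}_{L^2}\leq \norm{\eta}_{L^\infty}\norm{\Delta\eta}_{L^2}$. The second term is $\norm{|\nabla\eta|^2}_{L^2}=\norm{\nabla\eta}_{L^4}^2$, which is exactly what the first inequality bounds by $C\,\norm{\eta}_{L^\infty}\norm{\Delta\eta}_{L^2}$. Combining yields the claim.

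The only delicate point, and the one requiring a little care, is to avoid the circular integration by parts in the proof of the first inequality: integrating by parts on the \emph{same} factor twice simply returns $\int|\nabla\eta|^4$ up to cancellations, so one must commit to integrating only once and then accept the $L^\infty\cdot L^4\cdot L^4\cdot L^2$ type H\"older split. Aside from this, the argument is a clean exercise in integration by parts; the zero-mean hypothesis is not actually used in the calculation, being inherited from the ambient framework of the paper.
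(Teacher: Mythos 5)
Your proof is correct and follows essentially the same route as the paper: one integration by parts on $\int|\nabla\eta|^4$, a H\"older split of type $L^\infty\cdot L^2\cdot L^4\cdot L^4$, division by $\norm{\nabla\eta}_{L^4}^2$, and then the Leibniz identity for $\Delta(\eta^2)$ to deduce the second bound from the first. Your explicit justification of $\norm{D^2\eta}_{L^2}=\norm{\Delta\eta}_{L^2}$ on the torus makes precise a step the paper absorbs into the constant $C$.
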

\begin{proof}[proof]
We start proving the first inequality:
\begin{align*}
\norm{\na \eta}_{L^{4}(\TT^{2})}^{4} 
&= \int_{\TT^{2}} \left(\na \eta \cdot \na \eta\right)^{2} \\
&= - \int_{\TT^{2}} \eta \na\cdot \left(\na \eta |\na \eta|^{2}\right)\\ 
&= - \int_{\TT^{2}} \eta \Delta \eta |\na \eta|^{2} + \eta \na \eta \cdot \na |\na \eta|^{2} \\
& \leq C \norm{\eta}_{L^\infty(\TT^{2})} \norm{\Delta\eta}_{L^{2}(\TT^{2})} \norm{ |\na \eta|^{2}}_{L^{2}(\TT^{2})}\\ 
& \leq C  \norm{\eta}_{L^\infty(\TT^{2})} \norm{ \Delta \eta}_{L^{2}(\TT^{2})} \norm{ \na \eta}_{L^{4}(\TT^{2})}^{2}.
\end{align*}
Therefore, we conclude our result by noticing that
\begin{align*}
\norm{\Delta \eta^{2}}_{L^{2}(\TT^{2})}
&\leq C\norm{\eta \Delta \eta + |\na \eta|^{2}}_{L^{2}(\TT^{2})}\\
&\leq C\norm{\eta \Delta \eta}_{L^{2}(\TT^{2})} + \norm{ |\na \eta|^{2}}_{L^{2}(\TT^{2})}\\
&\leq C\norm{\eta}_{L^{\infty}(\TT^{2})} \norm{\Delta \eta }_{L^{2}(\TT^{2})}+ \norm{\na \eta}_{L^{4}(\TT^{2})}^{2}\\
&\leq C \norm{\eta}_{L^{\infty}(\TT^{2})} \norm{\Delta \eta }_{L^{2}(\TT^{2})} .
\end{align*}
\end{proof}
\begin{remark} We observe that the previous constants $C$ can be computed explicitly.
\end{remark}

The rest of this section is devoted to prove that the solutions of problem \eqref{KStwo-dimension} remain uniformly bounded in $L^{2}$. The following background flow method was first used by Nicolaenko, Scheurer, $\&$ Temam \cite{nicolaenko1985some} and then improved by Collet, Eckmann, Epstein $\&$ Stubbe \cite{collet1993global}, Goodman \cite{goodman1994stability} and Bronski \& Gambill \cite{bronski2006uncertainty}. Before stating the main result of this section, let us first define the following subspace of $H^2(\TT^2)$ : 
\begin{equation*}
	H^2_{\text{od}} (\TT^2) = \{ \eta \in H^2(\TT^2) : - \eta (-x, y) = \eta(x,y), \forall (x,y) \in \TT^2 \}
\end{equation*}
In terms of the rigorous results, a global bound on the solution is given by the following theorem : 
\begin{teo}{\label{absorbingset}}
Let $\eta_{0}\in H^2_{\text{od}} (\TT^2) $. Then the solution $\eta$ of the initial-value problem \eqref{KStwo-dimension} satisfies 
\begin{align}\label{absorbinginequality}
\limsup_{t\to\infty} \norm{\eta(t)}_{L^2(\TT^{2})} \leq R_{\ep, \delta,\beta}.
\end{align} 
where $R_{\ep, \delta, \beta}$ depends on $\ep, \delta, \beta$. 
\end{teo}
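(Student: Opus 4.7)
The approach I would take is the background-flow method of Nicolaenko--Scheurer--Temam \cite{nicolaenko1985some}, as refined by Collet--Eckmann--Epstein--Stubbe \cite{collet1993global}, Goodman \cite{goodman1994stability} and Bronski--Gambill \cite{bronski2006uncertainty}, adapted to the non-local operator $\Lambda^3$. The starting observation is that \eqref{KStwo-dimension} preserves the parity $\eta(-x,y)=-\eta(x,y)$: each of the terms $\eta\eta_x$, $\eta_{xx}$, $\eta_{yy}$, $\Lambda^3\eta$, $\Delta^2\eta$ is odd in $x$ whenever $\eta$ is, so $H^2_{\text{od}}(\T^2)$ is invariant under the flow. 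This invariance is what will make the symmetric cancellations below available.

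I would then decompose $\eta(t,x,y)=v(t,x,y)+\phi(x)$, where $\phi$ is a periodic, odd function of $x$ alone to be chosen later, so that $v$ remains in $H^2_{\text{od}}$. Substituting into \eqref{KStwo-dimension} and testing against $v$ in $L^2(\T^2)$, the identities $\int_{\T^2}v^2 v_x\,dxdy=0$ and $\int_{\T^2}\phi v v_x\,dxdy=-\tfrac12\int_{\T^2}\phi'(x)v^2\,dxdy$ collapse the quadratic interaction into the single term $\tfrac12\int_{\T^2}\phi'(x)v^2\,dxdy$. All the remaining linear-in-$\phi$ contributions assemble into a linear functional
\begin{equation*}
\mathcal R_\phi(v)=-\int_{\T^2}v\bigl(\phi\phi'+(\beta-1)\phi''-\delta\Lambda^3\phi+\ep\phi^{(4)}\bigr)\,dxdy,
\end{equation*}
leading to the energy identity
\begin{equation*}
\tfrac12\tfrac{d}{dt}\|v\|_{L^2}^2+\ep\|\Delta v\|_{L^2}^2+\|v_y\|_{L^2}^2-(\beta-1)\|v_x\|_{L^2}^2-\delta\|\Lambda^{3/2}v\|_{L^2}^2+\tfrac12\int_{\T^2}\phi'(x)v^2\,dxdy=\mathcal R_\phi(v).
\end{equation*}

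The crux of the proof is the choice of $\phi$ so that the left-hand side controls $\|v\|_{L^2}^2$ from below modulo a constant depending only on the parameters. The destabilizing terms $(\beta-1)\|v_x\|^2$ and $\delta\|\Lambda^{3/2}v\|^2$ are of strictly lower order than the hyperviscous term $\ep\|\Delta v\|^2$, so by interpolation and Young's inequality they can be absorbed into $\tfrac12\ep\|\Delta v\|^2$ at the price of an extra $C(\ep,\beta,\delta)\|v\|_{L^2}^2$. After this absorption, the coercivity requirement reduces to the pointwise inequality $\tfrac12\phi'(x)\ge \kappa+C(\ep,\beta,\delta)$ on $\T$ for some $\kappa>0$, which is easily met by a suitably steep odd periodic profile, for instance a smoothed sawtooth in the spirit of \cite{bronski2006uncertainty}. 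The forcing $\mathcal R_\phi(v)$ is then bounded by Cauchy--Schwarz and Young in terms of $\|v\|_{L^2}$ and fixed norms of $\phi$.

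Putting these estimates together produces a dissipative differential inequality
\begin{equation*}
\tfrac{d}{dt}\|v\|_{L^2}^2+\kappa\|v\|_{L^2}^2\le C(\ep,\beta,\delta),
\end{equation*}
to which Gronwall's lemma applies, yielding $\limsup_{t\to\infty}\|v\|_{L^2}\le\sqrt{C/\kappa}$. Since $\|\eta\|_{L^2}\le\|v\|_{L^2}+\|\phi\|_{L^2}$ and $\phi$ depends only on $\ep,\delta,\beta$, the absorbing radius $R_{\ep,\delta,\beta}$ of \eqref{absorbinginequality} follows. The main obstacle, exactly as in the classical one-dimensional KS analysis, is the design of the background $\phi$: because $\ep=\lambda^2$ can be small when the original period $L$ is large, and because $\Lambda^3$ is non-local and couples the $x$- and $y$-frequencies, one must verify carefully that a purely $x$-dependent sawtooth profile still produces a quadratic form strong enough to dominate the full mixed destabilizing operator on the odd subspace while keeping $\mathcal R_\phi$ under control.
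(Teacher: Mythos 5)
Your setup --- the background-flow decomposition, the parity invariance of $H^2_{\text{od}}$, the energy identity with the term $\tfrac12\int_{\TT^2}\phi'(x)v^2$, and the absorption of $(\beta-1)\norm{v_x}^2$ and $\delta\norm{\Lambda^{3/2}v}^2$ into the biharmonic dissipation by interpolation --- matches the paper's strategy. But the step on which everything hinges is wrong as stated: you reduce coercivity to the \emph{pointwise} inequality $\tfrac12\phi'(x)\ge\kappa+C(\ep,\beta,\delta)>0$ on $\TT$, and claim this is ``easily met by a suitably steep odd periodic profile.'' No periodic $\phi$ can satisfy this, since $\int_{\TT}\phi'(x)\,dx=0$ forces $\phi'$ to be negative somewhere; a smoothed sawtooth has precisely a region where $\phi'$ is large and negative. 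This is not a technicality --- it is \emph{the} classical obstruction in the Nicolaenko--Scheurer--Temam argument, and the entire content of the method lies in circumventing it. Your closing paragraph gestures at the difficulty but does not supply the missing mechanism, so the proof does not close.

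The paper resolves this without any pointwise positivity. After the Fourier-symbol bound $\tfrac12+2|\beta-1||\xi_1|^2-\tfrac12|\xi_2|^2+\delta|\xi|^3-\tfrac{\ep}{4}|\xi|^4\le C(\beta,\delta,\ep)$ produces the dangerous term $\int_{\TT^2}\bigl(\lambda-\tfrac{\phi_x}{2}\bigr)\eta^2$ with $\lambda=C+1$, the authors choose $\phi$ so that $\lambda-\tfrac{\phi_x}{2}=\lambda\sum_{|\xi_1|\le A/\ep}e^{-ix\xi_1}$, a truncated Dirichlet kernel (which of course changes sign). Then
\begin{equation*}
\int_{\TT^2}\Bigl(\lambda-\tfrac{\phi_x}{2}\Bigr)\eta^2=\lambda\sum_{|\xi_1|\le A/\ep}\widehat{\eta^2}(\xi_1,0),
\end{equation*}
and the odd symmetry enters here, not through a sign condition on $\phi'$: since $\eta(0,y,t)=0$, one has $\sum_{\xi_1\in\ZZ}\widehat{\eta^2}(\xi_1,0)=0$, so the low-frequency sum equals minus the tail over $|\xi_1|>A/\ep$, which Cauchy--Schwarz and the Gagliardo--Nirenberg inequality $\norm{\Delta(\eta^2)}_{L^2}\le C\norm{\eta}_{L^\infty}\norm{\Delta\eta}_{L^2}$ control by $\tfrac{C\ep}{A}\norm{\Delta\eta}_{L^2}^2$; taking $A$ large absorbs this into the dissipation. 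To repair your proof you would need to replace the pointwise condition by an operator-level statement (e.g.\ that $\ep\Delta^2+\tfrac12\phi'+C$ is bounded below on the odd subspace, in the spirit of the Bronski--Gambill uncertainty estimate) or adopt the frequency-cancellation argument above; as written, the required $\phi$ does not exist.
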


\begin{proof}[proof]
The proof is based on the construction of a Lyapunov functional, $\mathcal{F}(t)$, such that
$$
\frac{d}{dt}\mathcal{F}(t)\leq 0,
$$
if 
$$
\mathcal{F}(t)\geq R_{\ep, \delta,\beta},
$$
i.e. implying the existence of an absorbing set in $L^{2}$.
We first let $\phi$ be a smooth, $2\pi$-periodic function, which we will choose later. Then, we multiply equation \eqref{KStwo-dimension} by $\eta-\phi$, and integrate by parts:
\begin{align*}
\int \int \left( (\eta-\phi)_{t} + \eta \eta_{x}+ (\beta-1)  \eta_{xx}-\eta_{yy}- \delta \Lambda^{3}(\eta)+ \ep \Delta^{2} \eta \right) (\eta-\phi)=0,
\end{align*}
thus,
\begin{multline*}
\frac{1}{2} \frac{d}{dt} \norm{\eta-\phi}^{2}_{L^{2}}
+ \frac{1}{2} \int_{\TT^{2}}  \phi _{x} \eta^{2} 
- (\beta-1)\norm{\eta_{x}}^{2}_{L^{2}}
+ \norm{\eta_{y}}^{2}_{L^{2}} 
+ \ep \norm{\Delta  \eta}^{2}_{L^{2}}\\
- \int_{\TT^{2}} (\beta-1) \eta_{xx} \phi 
+ \int_{\TT^{2}} \eta_{yy} \phi 
- \delta \int_{\TT^{2}} \Lambda^{3} \eta(\eta-\phi)
- \int_{\TT^{2}} \ep\Delta \eta \Delta  \phi  = 0.
\end{multline*}
For the term corresponding to the nonlocal self-adjoint operator $\Lambda$, we have that   
$$\int_{\TT^{2}}  \Lambda \eta \phi = \int_{\TT^{2}}  \eta \Lambda  \phi.$$
Hence, by the Young's inequality and the H\"older inequality, we have that,
\begin{align}\label{inequality}
\begin{split}
\frac{1}{2}\frac{d}{dt} \norm{\eta-\phi}^{2}_{L^{2}}
= 	- \int_{\TT^{2}} \frac{\phi_{x}}{2}  \eta^{2} 
+ (\beta-1)\norm{\eta_{x}}^{2}_{L^{2}}
- \norm{\eta_{y}}^{2}_{L^{2}} 
- \ep \norm{\Delta  \eta}^{2}_{L^{2}}
+ \delta \int_{\TT^{2}} \Lambda^{3} \eta \eta  
\\
\hskip .2cm +(1-\beta)\int_{\TT^{2}} \eta_{x} \phi_{x}
+ \int_{\TT^{2}} \eta_{y} \phi_{y}
- \delta \int_{\TT^{2}} \Lambda^{3} \eta\phi
+ \int_{\TT^{2}} \ep \Delta \eta \Delta  \phi \\
\quad \leq - \int_{\TT^{2}} \frac{\phi_{x}}{2}  \eta^{2} 
+ 2|\beta-1|\norm{\eta_{x}}^{2}_{L^{2}}
- \frac{1}{2} \norm{\eta_{y}}^{2}_{L^{2}} 
- \frac{\ep}{2} \norm{\Delta  \eta}^{2}_{L^{2}} 
+ \delta \norm{\Lambda^{\frac{3}{2}} \eta}_{L^2}^2 \\
\quad \hskip .5cm +\frac{1}{2} \norm{\eta}_{L^{2}}^{2} 
+|\beta-1| \norm{\phi_{x}}_{L^{2}}^{2}+\frac{1}{2}\norm{\phi_{y}}_{L^{2}}^{2}+\frac{\ep}{2}\norm{ \Delta \phi}_{L^{2}}^{2} + \frac{\delta^2}{2} \norm{\phi}_{H^{3}}^{2}
\end{split}
\end{align}
Now, we define the following function 
\begin{equation*}
f(|\xi|)= f(z): = \frac{1}{2} + 2(\beta + 2) z^{2} + \delta z^{3}- \frac{\ep}{4} z^{4} , \quad z \geq 0.
\end{equation*}
Observe that $f(z)$ has at most three zeros. Since $\ep > 0$, we find that $f(z)$ is bounded and has global maximum $\frac{1}{2} + (8(\beta+2)\ep + 4\delta^2)  \delta^2 /\ep^3$ at point $z_1 = 2 \delta / \ep$. Then we have the following inequality
\begin{align*}
\frac{1}{2} + 2|\beta -1 | |\xi_{1}|^{2} -\frac{1}{2} |\xi_{2}|^{2} + \delta|\xi|^{3}- \frac{\ep}{4} |\xi|^{4} 
& \leq f(z_1) = \frac{1}{2} + (8(\beta+2)\ep + 4\delta^2)  \delta^2 /\ep^3\\
&:= C(\beta,\delta,\ep).
\end{align*}
Inserting this relation into \eqref{inequality}, we obtain that  
\begin{align}{\label{equality1}}
\begin{split}
\frac{1}{2} \frac{d}{dt} \norm{\eta-\phi}^{2}_{L^{2}(\TT^{2})}
& = - \frac{\ep}{4} \norm{\Delta  \eta}^{2}_{L^{2}(\TT^{2})}
 - \norm{\eta}_{L^{2}}^{2} +  \int_{\TT^{2}} \left(\lambda - \frac{\phi_{x} }{2}\right) \eta^{2} 
+ F(\phi),
\end{split}
\end{align}
where 
$$\lambda = C(\beta, \delta, \ep)+ 1, $$
and
$$F(\phi)=|\beta-1| \norm{\phi_{x}}_{L^{2}}^{2}+\frac{1}{2}\norm{\phi_{y}}_{L^{2}}^{2}+\frac{\ep}{2}\norm{ \Delta \phi}_{L^{2}}^{2} + \frac{\delta^2}{2} \norm{\phi}_{H^{3}}^{2}. $$
Now, we choose $\phi (x,y)$ such that 
\begin{align*}
\frac{\phi_x}{2} = - \lambda \sum_{0<|\xi_1| \leq A /\ep} e^{-i x \xi_1} ,
\end{align*}
which is possible since the right-hand side has zero horizontal mean value. Here, $A$ is a constant independent of $\ep$, which will be determined later.

Then we claim that 
\begin{align}\label{ineqphi}
|\int_{\TT^{2}} \left(\lambda - \frac{\phi_{x} }{2}\right) \eta^{2}|
\leq \frac{\ep}{8} \norm{\Delta \eta}_{L^2(\TT^2)}^2
\end{align}
Indeed, with the choice of $\phi$, we have that 
\begin{align*}
\int_{\TT^{2}} \left(\lambda - \frac{\phi_{x}}{2}\right) \eta^{2}
&= \int_{\TT^{2}} \lambda \big(1 +\sum_{0<|\xi_1| \leq A /\ep} e^{-i x \xi_1}\big) \eta^{2}\\	
&= \lambda  \sum_{|\xi_1| \leq A /\ep} \int_{\TT^{2}}  e^{-i x \xi_1} \eta^{2}\\
&= 
\lambda \sum_{|\xi_1| \leq A /\ep} \widehat{\eta^2}(\xi_1, 0 ).
\end{align*}
Since the odd symmetry in the $x$-direction is preserved by the equation and $\eta_0 \in H^2_{\text{od}}(\TT^2)$, we have the fact that $\eta^2 (0, y, t)=0$ for any $y \in \TT$ and $t \geq 0$, so that
\begin{align*}
\sum_{\xi_1 \in \ZZ} \widehat{\eta^2}(\xi_1, 0 ) = 0,
\end{align*}
which implies
\begin{align*}
		\sum_{|\xi_1| \leq A /\ep} \widehat{\eta^2}(\xi_1, 0 ) = -\sum_{|\xi_{1}| > A/\ep} \widehat{\eta^2} (\xi_{1}, 0 ). 
\end{align*}
Then, by Cauchy-Schwartz, we bound
\begin{align*}
\left|	\sum_{|\xi_1| \leq A /\ep} \widehat{\eta^2}(\xi_1, 0 )\right|
&\leq  \sum_{|\xi_1|> A/\ep}|\xi_1|^2\left|\widehat{\eta^2}(\xi_1,0)\right|\frac{1}{|\xi_1|^2}\\
&\leq \frac{\ep}{A}\sum_{|\xi_1|> A/\ep}|\xi_1|^2\left|\widehat{\eta^2}(\xi_1,0)\right|\frac{1}{|\xi_1|}\\
&\leq \frac{\ep}{A} \Big(\sum_{|\xi_1|> A/\ep}|\xi_1|^4\left|\widehat{\eta^2}(\xi_1,0)\right|^2 \Big)^{\frac{1}{2}}
\Big(\sum_{\xi_{1} > A/\ep}\frac{1}{|\xi_1|^2}\Big)^{\frac{1}{2}}\\
& \leq \frac{C\ep}{A} \Big(\sum_{\xi_{2} \in \ZZ}\sum_{\xi_1\in \ZZ}\big(|\xi_1|^2 + |\xi_2|^2 \big)^2\left|\widehat{\eta^2}(\xi_1,\xi_2)\right|^2 \Big)^{\frac{1}{2}}\\
&\leq  \frac{C\ep}{A}\|\Delta(\eta^2)\|_{L^2(\TT^2)}
\leq  \frac{C\ep }{A}\|\Delta\eta\|_{L^2(\TT^2)}^2,
\end{align*}
where we have used Lemma \ref{lemmatempor} in the last step as well as the Sobolev embedding $H^2(\TT^2)$ into $L^{\infty}(\TT^2)$. Hence,
\begin{equation*}
	|\int_{\TT^{2}} \left(\lambda - \frac{\phi_{x} }{2}\right) \eta^{2}| \leq \lambda \frac{C \ep }{A}\|\Delta\eta\|_{L^2(\TT^2)}^2
\end{equation*}
The choice $A>8C \lambda$ justifes the claim \eqref{ineqphi}.\\

We consider the functional
$$
\mathcal{F}(t)=\norm{\eta-\phi}^{2}_{L^{2}(\TT^{2})}.
$$
Inserting this into \eqref{equality1}, we obtain
\begin{align*}
\begin{split}
\frac{1}{2} \frac{d}{dt} \norm{\eta-\phi}^{2}_{L^{2}(\TT^{2})}
& \leq  - \frac{\ep}{8} \norm{\Delta  \eta}^{2}_{L^{2}(\TT^{2})} - \norm{\eta}_{L^{2}}^{2} + F(\phi)
\end{split}
\end{align*}
Hence
\begin{align}\label{estimate before gronwall}
\frac{1}{2} \frac{d}{dt} \norm{\eta-\phi}^{2}_{L^{2}(\TT^{2})}
& \leq -\norm{\eta - \phi}_{L^{2}(\TT^{2})}^{2} + \norm{\phi}^{2}_{L^{2}} + F(\phi),
\end{align}
or, equivalently,
$$
\frac{1}{2} \frac{d}{dt}\mathcal{F} \leq -\mathcal{F} + \norm{\phi}^{2}_{L^{2}} + F(\phi),
$$
which allows us to conclude the uniform boundedness of $\mathcal{F}$. Indeed, using Gronwall inequality, we  immediately obtain 
\begin{align*}
\begin{split}
\norm{\eta-\phi}^{2}_{L^{2}(\TT^{2})} \leq \big(\norm{\eta_{0}-\phi}^{2}_{L^{2}(\TT^{2})} -\norm{\phi}^{2}_{L^{2}} - F(\phi) \big) e^{-2 t} +\norm{\phi}^{2}_{L^{2}} + F(\phi).
\end{split}
\end{align*}
where
$$F(\phi)=|\beta-1| \norm{\phi_{x}}_{L^{2}}^{2}+\frac{1}{2}\norm{\phi_{y}}_{L^{2}}^{2}+\frac{\ep}{2}\norm{ \Delta \phi}_{L^{2}}^{2} + \frac{\delta^2}{2} \norm{\phi}_{H^{3}}^{2}. $$
Thus, if $1\ll\|\eta(t)\|_{L^2}$, we conclude that 
\begin{align*}
\norm{\eta}_{L^{2}(\TT^{2})}
& \leq \norm{\eta-\phi}_{L^{2}(\TT^{2})} + \norm{\phi}_{L^{2}(\TT^{2})}\\
& \leq \left(\norm{\eta_{0}-\phi}^{2}_{L^{2}(\TT^{2})} +\norm{\phi}^{2}_{L^{2}} + F(\phi) \right)^{\frac{1}{2}} e^{-  t} +2 \norm{\phi}_{L^{2}} + F(\phi)^{\frac{1}{2}}\\
& \leq \left( \norm{\eta_{0}}_{L^{2}} +2\norm{\phi}_{L^{2}} + (\beta+2) \norm{\phi}_{H^{1}}+\sqrt{\ep}\norm{\phi}_{H^{2}}+ \delta \norm{\phi}_{H^{3}}
\right) e^{- t} \\
&\quad+2 \norm{\phi}_{L^{2}} + (\beta+2) \norm{\phi}_{H^{1}}+\sqrt{\ep}\norm{\phi}_{H^{2}}+ \delta \norm{\phi}_{H^{3}}\\
& := R_{\ep, \delta,\beta}.
\end{align*}
This completes the proof of Theorem \ref{absorbingset}.
\end{proof}

Similarly as we have obtained that there exists an absorbing set in $L^{2}$, we can conclude the existence of an absorbing set in higher Sobolev norms. 
\begin{teo}{\label{absorbingsetH4}}
Let $\eta_{0}\in H^2_{\text{od}}(\TT^2)$. Then the solution $\eta$ of the initial-value problem \eqref{KStwo-dimension} satisfies 
$$ \limsup_{t\to\infty} \norm{\eta(t)}_{H^2(\TT^{2})} \leq R'_{\ep, \delta,\beta}. 
$$
where $R'_{\ep, \delta, \beta}$ is a constant depending on $\ep, \delta, \beta$. 
\end{teo}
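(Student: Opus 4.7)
The plan is to bootstrap the $L^2$ absorbing ball of Theorem~\ref{absorbingset} up to $H^2$ by iterating the parabolic smoothing of the equation through the \emph{uniform Gronwall lemma}: if a non-negative $y$ satisfies $y'\le g\,y+h$ with $\int_t^{t+1} g$, $\int_t^{t+1} h$ and $\int_t^{t+1} y$ all bounded independently of $t\geq T^\star$, then $y(t)$ itself is bounded independently of $t\geq T^\star+1$. The argument proceeds in three stages, gaining one order of smoothness at each step.

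In the first stage, Theorem~\ref{absorbingset} gives $\|\eta(t)\|_{L^2}\le R_{\ep,\delta,\beta}$ for $t\geq T_0$; integrating \eqref{estimate before gronwall} over $[t,t+1]$ and keeping the $-\tfrac{\ep}{8}\|\Delta\eta\|_{L^2}^2$ dissipation on the right produces a uniform bound $\int_t^{t+1}\|\eta(s)\|_{H^2}^2\,ds\le M_0$ for $t\geq T_0$, and by the Sobolev embedding $H^2(\TT^2)\hookrightarrow L^\infty(\TT^2)$ also $\int_t^{t+1}\|\eta(s)\|_{L^\infty}^2\,ds\le CM_0$. In the second stage, multiplying \eqref{KStwo-dimension} by $-\Delta\eta$ and integrating by parts, with the nonlinearity $\int\eta\eta_x(-\Delta\eta)$ handled exactly as in Step~2 of the proof of Theorem~\ref{globalexistence}, yields
\[
\frac{d}{dt}\|\nabla\eta\|_{L^2}^2 + \tfrac{\ep}{2}\|\eta\|_{H^3}^2 \leq \Big(C(\ep,\beta,\delta)+\tfrac{1}{\ep}\|\eta\|_{L^\infty}^2\Big)\|\nabla\eta\|_{L^2}^2.
\]
Since the coefficient and $\|\nabla\eta\|_{L^2}^2\le\|\eta\|_{H^2}^2$ are uniformly integrable on unit windows by the first stage, the uniform Gronwall lemma delivers $\|\eta(t)\|_{H^1}\leq M_1$ for $t\geq T_0+1$, and a further time-integration yields $\int_t^{t+1}\|\eta(s)\|_{H^3}^2\,ds\le M_1'$.

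In the third stage, multiply \eqref{KStwo-dimension} by $\Delta^2\eta$, integrate by parts, and estimate $\int\eta\eta_x\Delta^2\eta$ via Cauchy--Schwarz and Young's inequality by $\tfrac{\ep}{4}\|\Delta^2\eta\|_{L^2}^2 + C\|\eta\|_{L^\infty}^2\|\nabla\eta\|_{L^2}^2$; combined with the uniform $H^1$ bound $M_1$, this yields
\[
\frac{d}{dt}\|\Delta\eta\|_{L^2}^2 + \tfrac{\ep}{2}\|\eta\|_{H^4}^2 \leq C(\ep,\beta,\delta,M_1)\big(1+\|\eta\|_{L^\infty}^2\big)\|\Delta\eta\|_{L^2}^2 + C(\ep,\beta,\delta,M_1).
\]
The right-hand side is once again uniformly integrable on unit windows thanks to the first two stages, and $\|\Delta\eta\|_{L^2}^2\le\|\eta\|_{H^2}^2$ is integrable by the first stage, so a final application of the uniform Gronwall lemma produces the desired absorbing bound $\|\eta(t)\|_{H^2}\le R'_{\ep,\delta,\beta}$ for all $t\geq T_0+2$.

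The main delicate point is the nonlinear term $\int \eta\eta_x\Delta^2\eta$ in the $H^2$ energy estimate: derivatives must be split carefully with Young's inequality so that $\ep\|\eta\|_{H^4}^2$ absorbs the top-order factor and the residual involves only norms already controlled at previous stages. Attempting to jump directly from $L^2$ to $H^2$ without first securing the intermediate $H^1$ bound would leave $\|\eta\|_{H^2}^2$ on both sides of the inequality with a coefficient that is not a priori small, and the uniform Gronwall argument would fail to close; this is why the argument is organised as a ladder $L^2\to H^1\to H^2$ rather than in a single step.
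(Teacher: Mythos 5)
Your proposal is correct, and it reaches the absorbing ball in $H^2$ by a genuinely different mechanism than the paper. The paper first combines the $L^2$ absorbing set with the finite-time energy estimate to get a bound on $\sup_{t\geq 0}\|\eta(t)\|_{L^2}$, then uses the interpolation $\|\eta\|_{L^4}\leq C\|\eta\|_{L^2}^{1/2}\|\eta\|_{H^1}^{1/2}$ to convert the nonlinear contribution $c_\ep\|\eta\|_{L^4}^4$ into $C_{\ep,\delta,\eta_0}\|\eta\|_{H^1}^2$, and finally splits the dissipation $-\tfrac{\ep}{2}\|\eta\|_{H^3}^2$ (via Plancherel and Poincar\'e) so as to absorb every right-hand-side term \emph{and} retain a damping term $-\tfrac{\ep}{16}\|\eta\|_{H^1}^2$; the resulting inequality $y'+cy\leq C$ gives the absorbing ball directly, with an explicit exponential rate of entry. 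You instead invoke the uniform Gronwall lemma, which requires no damping term at all: only unit-window integrability of the coefficient $C+\tfrac{1}{\ep}\|\eta\|_{L^\infty}^2$ and of the quantity being estimated, both of which you correctly extract from the time-integrated dissipation at the previous rung of the ladder ($\int_t^{t+1}\|\eta\|_{H^2}^2\,ds\leq M_0$ from the $L^2$ level, then $\int_t^{t+1}\|\eta\|_{H^3}^2\,ds\leq M_1'$ from the $H^1$ level). Your version has the small advantage that, since you work only on $t\geq T_0$ where $\|\eta\|_{L^2}\leq R_{\ep,\delta,\beta}+1$, the constants $M_0,M_1,R'$ depend only on $\ep,\delta,\beta$, which is exactly what the statement claims; the paper's constants as written carry a dependence on $\eta_0$ that would have to be removed by the same restarting device. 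What the paper's route buys in exchange is a bound valid for all $t\geq 0$ and a quantitative decay of $\|\eta(t)\|_{H^1}^2$ toward the absorbing ball. Your closing remark about the necessity of the intermediate $H^1$ step is also consistent with the paper, which runs the same $L^2\to H^1\to H^2$ ladder.
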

\begin{proof}[proof]
Recalling the existence of an absorbing set in the $L^{2}$-norm and the regularity results in Theorem \ref{globalexistence}, so the proof is straightforword by using a bootstrap argument.

We first show that there exists an absorbing set in the $H^{1}$-norm. Inequality (\ref{absorbinginequality}) implies that for a $T >0$ large enough, we have
$$ \norm{\eta(t)}_{L^2(\TT^{2})} \leq R_{\ep, \beta, \delta}+1 ,\;\; \forall t > T.$$
Combining this inequality with the $L^{2}$ energy estimate in the proof of Theorem \ref{globalexistence}, we obtain that
$$\norm{\eta}_{L^{\infty}([0,T];L^{2})}^{2} \leq \norm{\eta_{0}}_{L^{2}}^{2} \exp \left( C(\ep, \beta,\delta) T \right), $$ 
which results in that there exists a constant depending on initial data, $\delta$ and $\epsilon$ such that 
$$ \max_{0\leq t<\infty} \norm{\eta(t)}_{L^{2}(\TT^{2})}^{2} \leq C (\norm{\eta_{0}}_{L^{2}},\ep, \beta, \delta).$$
We multiply \eqref{KStwo-dimension} by $- \Delta \eta$ and integrate by parts to obtain that
\begin{align*}
\frac{1}{2}\frac{d}{dt} \norm{\eta}^{2}_{H^{1}} 
&\leq (\beta-1) \norm{\eta_{x}}^{2}_{H^{1}} - \norm{\eta_{y}}_{H^{1}}^{2} + \delta \norm{\Lambda^{\frac{3}{2}}\eta}^{2}_{H^{1}} - \ep \norm{\Delta \eta}^{2}_{H^{1}}+ \norm{\eta}_{L^{4}}^2 \norm{\Delta \eta_x}_{L^{2}}\\
&\leq c_\ep \|\eta\|_{L^4}^4-\frac{\ep}{2} \norm{ \eta}^{2}_{H^{3}}+|\beta-1| \norm{\eta_{x}}^{2}_{H^{1}} - \norm{\eta_{y}}_{H^{1}}^{2} + \delta \norm{\Lambda^{\frac{3}{2}}\eta}^{2}_{H^{1}}\\
&\leq C_{\ep,\delta,\eta_0}\|\eta\|_{H^1}^2-\frac{\ep}{2} \norm{ \eta}^{2}_{H^{3}}+|\beta-1| \norm{\eta_{x}}^{2}_{H^{1}} - \norm{\eta_{y}}_{H^{1}}^{2} + \delta \norm{\Lambda^{\frac{3}{2}}\eta}^{2}_{H^{1}},\\
&\leq C_{\ep, \beta, \delta,\eta_0}\|\eta\|_{H^1}^2-\frac{\ep}{4} \norm{ \eta}^{2}_{H^{3}}+C_{\ep,\delta}\|\eta\|_{L^2}^2\\
&\leq -\frac{\ep}{8} \norm{ \eta}^{2}_{H^{3}}+C_{\ep,\beta,\delta,\eta_0}\\
&\leq -\frac{\ep}{16} \norm{ \eta}^{2}_{H^{1}}-\frac{\ep}{16} \norm{ \eta}^{2}_{H^{3}}+C_{\ep, \beta,\delta,\eta_0}.
\end{align*}
where we used the Plancherel Theorem, the Poincar\'e inequality and the Sobolev inequality
$$
\|\eta\|_{L^4}\leq C\|\eta\|_{H^{1/2}}\leq C\|\eta\|_{L^2}^{1/2}\|\eta\|_{H^1}^{1/2}.
$$
It follows that 
\begin{align*}
\frac{d}{dt} \norm{\eta}^{2}_{H^{1}} + \frac{\ep}{8} \norm{ \eta}^{2}_{H^{1}}
& \leq C (\norm{\eta_{0}}_{L^{2}},\ep,\beta, \delta).
\end{align*}
Using the Gronwall inequality, we immediately obtain the uniform bound 
\begin{align*}
\norm{\eta(t)}^{2}_{H^{1}}
& \leq  C (\norm{\eta_{0}}_{H^{1}},\ep, \beta, \delta).
\end{align*}

Recall that the $H^2$ energy estimate is 
\begin{align*}
\frac{d}{dt} \norm{\eta}^{2}_{H^{2}} + \frac{\ep}{2} \norm{\eta}^{2}_{H^{4}}
& \leq \left( C(\ep, \beta,\delta)+\frac{1}{\ep}  \norm{\eta}^{2}_{L^{\infty}} \right) \norm{\eta}^{2}_{H^{1}},
\end{align*}
so we can mimic the previous proof to obtain that there exists an absorbing set in $H^{2}$.
The proof is completed.

\end{proof}

\begin{remark}
	It is worth to point out that in the two theorems above, we show the large time dynamic of the equation for initial data belonging to $	H^2_{\text{od}}(\TT^2)$, which requires an odd symmetry condition in $x$-direction. Since the odd symmetry in $x$-direction is preserved by equation \eqref{KStwo-dimension}, working in  $H^2_{\text{od}}(\TT^2)$ is reasonable.
	To remove this condition, we require more regularity on the test function $\phi$ and this causes the case to become more delicate to handle. This case will be shown in our upcoming work. 
\end{remark}

\section{Analyticity}
The aim of this section is to show instant analyticity for the solutions of \eqref{KStwo-dimension}. We shall prove that the solutions of \eqref{KStwo-dimension} are analytic in a strip. In order to do this, we use the method developed by Collet, Eckmann, Epstein $\&$ Stubbe in \cite{collet1993analyticity} (see also \cite{foias1989gevrey}). Roughly speaking, our proof is based on \textit{a priori} estimates for functions in certain Gevrey class.

Given a function $\sigma(t)$ positive (see its formula explicit below), we consider the weighted exponential operators
$$e^{\sigma(t) \Lambda} \eta = \sum_{\xi\in \TT^{2}} \hat{\eta}(\xi) e^{\sigma(t) |\xi| } e^{i \xi \cdot x} $$
for functions in the space
$$ G := \{ \eta \in L^{2}(\TT^{2}):  \sum_{\xi \in \ZZ^{2}} e^{2\sigma(t) |\xi|} |\hat{\eta}(\xi)|^{2} < \infty \}.$$
We observe that the functions in $G$ are analytic. We also define the inner product and norm on this Hilbert space by 
$$
\langle \mu , \eta \rangle_{\sigma(t)} = \int_{\mathbb{T}^2}e^{\sigma(t) \Lambda} \mu \overline{e^{\sigma(t) \Lambda} \eta} =4\pi^2\sum_{\ell\in\mathbb{Z}^2}e^{2\sigma(t) |\ell|}\widehat{\mu(\ell)}\overline{\widehat{\eta(\ell)}},
$$
$$
\norm{\eta}^{2}_{\sigma(t)}= \norm{\etl\eta}^{2}_{L^{2}}.
$$

With these previous definitions, we can state the main result of this section.
\begin{teo}\label{analyticteo}
Let $\eta_{0}$ be given in $H^{2}_{\text{od}}(\TT^{2})$. Then, there exists $T_0$ depending on $\eta_{0}, \ep, \beta, \delta$ such that the solution of \eqref{KStwo-dimension} satisfies
$$\norm{e^{\sigma(t) \Lambda} \eta (t)}_{L^{2}}^{2} \leq  1+2 C_{\ep,\beta,\delta,\eta_0}^{2}, \;\forall\;t > 0$$
where $\sigma(t)= \min\{ \tanh(t), \tanh\left(\frac{T_0}{2}\right)\} $. In particular, it becomes analytic for $t>0$.
\end{teo}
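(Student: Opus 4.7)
The plan is to follow the Foias--Temam Gevrey-class method, as adapted by Collet--Eckmann--Epstein--Stubbe to the Kuramoto--Sivashinsky equation. I would apply the Fourier multiplier $\etl$ to the equation \eqref{KStwo-dimension} and take the $L^2(\TT^2)$ inner product with $\etl\eta$. Since $\pat e^{\sigma(t)|\xi|} = \sigma'(t)|\xi|e^{\sigma(t)|\xi|}$, this yields
\begin{equation*}
\frac{1}{2}\frac{d}{dt}\norm{\eta}_{\sigma(t)}^2 = \sigma'(t)\norm{\Lambda^{1/2}\etl\eta}_{L^2}^2 + \langle \eta_t,\eta\rangle_{\sigma(t)},
\end{equation*}
and substituting the PDE into the second summand splits the right-hand side into linear Fourier-multiplier contributions and the quadratic nonlinearity $-\langle \eta\eta_x,\eta\rangle_\sigma$. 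The choice $\sigma(t) = \min\{\tanh(t),\tanh(T_0/2)\}$ satisfies $0\leq \sigma'(t)\leq 1$, so the weight-gain term is small enough to be absorbed by the hyperdissipation.

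Since $\etl$ commutes with every Fourier multiplier, Plancherel converts each linear piece into the same expression applied to $\etl\eta$, and Young's inequality applied exactly as in Step~1 of the proof of Theorem~\ref{globalexistence} yields
\begin{equation*}
\sigma'(t)\norm{\Lambda^{1/2}\etl\eta}_{L^2}^2 + \langle (\beta-1)\eta_{xx} - \eta_{yy} - \delta\Lambda^3\eta + \ep\Delta^2\eta,\eta\rangle_\sigma \leq C(\ep,\beta,\delta)\norm{\eta}_\sigma^2 - \frac{3\ep}{4}\norm{\Lambda^2\etl\eta}_{L^2}^2.
\end{equation*}

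The main obstacle is the nonlinear term. Integration by parts together with the fact that $\etl$ commutes with $\pax$ gives
\begin{equation*}
|\langle \eta\eta_x,\eta\rangle_\sigma| = \frac{1}{2}\left|\int_{\TT^2} \etl(\eta^2)\,\pax\etl\eta\right| \leq \frac{1}{2}\norm{\etl(\eta^2)}_{L^2}\norm{\pax\etl\eta}_{L^2}.
\end{equation*}
The pointwise subadditivity $e^{\sigma|\xi|}\leq e^{\sigma|\xi-\zeta|}e^{\sigma|\zeta|}$ valid for all $\xi,\zeta\in\ZZ^2$, combined with a Fourier expansion, produces $\norm{\etl(\eta^2)}_{L^2}\leq \norm{\etl\eta}_{L^4}^2$. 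The 2D Sobolev embedding $H^{1/2}(\TT^2)\hookrightarrow L^4(\TT^2)$ together with interpolation between $L^2$ and $H^2$ in the Gevrey scale then bounds the nonlinear term by $C\norm{\etl\eta}_{L^2}^2\norm{\Lambda^2\etl\eta}_{L^2}$, and a final application of Young's inequality absorbs it into $(\ep/4)\norm{\Lambda^2\etl\eta}_{L^2}^2 + C_\ep\norm{\etl\eta}_{L^2}^4$.

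Combining the above yields a scalar ODE inequality of the form
\begin{equation*}
\frac{d}{dt}\norm{\eta}_\sigma^2 + \frac{\ep}{2}\norm{\Lambda^2\etl\eta}_{L^2}^2 \leq C_1\norm{\eta}_\sigma^2 + C_2\norm{\eta}_\sigma^4.
\end{equation*}
Since $\sigma(0)=0$, the initial value is $\norm{\eta_0}_{L^2}^2\leq C_{\ep,\beta,\delta,\eta_0}^2$ by Theorem~\ref{absorbingsetH4}. A comparison argument with this ODE then fixes $T_0$ small enough, in terms of $\eta_0,\ep,\beta,\delta$, so that $\norm{\eta(t)}_\sigma^2\leq 1+2C_{\ep,\beta,\delta,\eta_0}^2$ on $[0,T_0]$. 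For $t\geq T_0/2$ the weight freezes at $\tanh(T_0/2)$, so $\sigma'\equiv 0$, the inequality reduces to a standard weighted energy estimate, and the uniform-in-time $H^2$ absorbing ball from Theorem~\ref{absorbingsetH4} propagates the bound for all subsequent $t$. The delicate step will be the bookkeeping of constants so that the short-time Gevrey bound and the long-time absorbing-ball bound glue together consistently across $t=T_0/2$, reproducing exactly the announced threshold $1+2C_{\ep,\beta,\delta,\eta_0}^2$.
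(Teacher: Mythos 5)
Your local-in-time argument is essentially the paper's: the same Gevrey energy identity with the $\sigma'(t)\langle\Lambda\eta,\eta\rangle_{\sigma}$ correction, the same treatment of the nonlinearity via subadditivity of $e^{\sigma|\xi|}$ on the Fourier side followed by $H^{1/2}(\TT^2)\hookrightarrow L^4(\TT^2)$ (this is exactly Lemma \ref{nonlinearterm}), and the same interpolation in the Gevrey scale leading to a Riccati-type inequality $y'\leq Ky^2$ for $y=1+\norm{\eta}_{\sigma(t)}^2$. Up to the time $T_0\sim \bigl(K(1+\norm{\eta_0}_{L^2}^2)\bigr)^{-1}$ this is correct and matches the paper.

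The gap is in the globalization. You claim that for $t\geq T_0/2$, since $\sigma'\equiv 0$, ``the inequality reduces to a standard weighted energy estimate, and the uniform-in-time $H^2$ absorbing ball from Theorem \ref{absorbingsetH4} propagates the bound.'' This does not work: even with $\sigma'=0$ the differential inequality retains the quartic term $C_2\norm{\eta}_{\sigma}^4$ coming from the nonlinearity, so by itself it only controls $\norm{\eta}_{\sigma}$ on a time interval of length comparable to $1/(K\,y)$ and a priori permits finite-time blow-up of the Gevrey norm beyond $T_0$. Moreover the $H^2$ absorbing ball gives no control whatsoever on $\norm{e^{\sigma_*\Lambda}\eta}_{L^2}$ for a fixed $\sigma_*>0$ (a function with $|\hat\eta(\xi)|\sim|\xi|^{-4}$ is in $H^2$ but not in the Gevrey class), so it cannot ``propagate'' the bound. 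The missing idea, which is how the paper proceeds, is an iteration: at each time $kT_0/2$ one restarts the estimate with the \emph{shifted} weight $\sigma(t-kT_0/2)$, which vanishes at the restart time, so that the initial value of the restarted Riccati inequality is merely $1+\norm{\eta(kT_0/2)}_{L^2}^2$, uniformly bounded by the $L^2$ absorbing set of Theorem \ref{absorbingset}. This gives a uniform lifespan $T_0$ for each restarted estimate; since the shifted weight already reaches $\tanh(T_0/2)$ halfway through each window, the overlapping windows $[kT_0/2,\,kT_0/2+T_0]$ cover $[T_0,\infty)$ and yield the stated uniform bound with the frozen width $\tanh(T_0/2)$. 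Without this restart mechanism your argument does not reach ``$\forall\,t>0$''.
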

Before proving theorem \ref{analyticteo}, we first state some auxiliary lemmas:
\begin{lem}{\label{estimation}}
For every $b > a \geq 0$,
$$ \norm{\Lambda^{\frac{a}{2}} \eta }^{2}_{\sigma(t)} \leq
\norm{\Lambda^{\frac{b}{2}} \eta}^{\frac{2 a}{b}}_{\sigma(t)} \norm{\eta}^{2-\frac{2a}{b}}_{\sigma(t)}  . $$
\end{lem}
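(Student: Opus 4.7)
The plan is to rewrite both sides on the Fourier side using the Plancherel-type identity built into the norm $\|\cdot\|_{\sigma(t)}$, and then apply a discrete Hölder inequality in the spectral variable. The key observation is that all three quantities in the statement are (up to a constant factor $4\pi^2$) of the form $\sum_{\xi\in\ZZ^2} e^{2\sigma(t)|\xi|} |\xi|^s |\hat{\eta}(\xi)|^2$ with $s=a$, $s=b$ and $s=0$ respectively, so the inequality reduces to an interpolation among weighted $\ell^2$ norms.

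Concretely, starting from
$$
\norm{\Lambda^{a/2}\eta}^2_{\sigma(t)} = 4\pi^2 \sum_{\xi\in\ZZ^2} e^{2\sigma(t)|\xi|}\,|\xi|^{a}\,|\hat{\eta}(\xi)|^2,
$$
I would split the summand by writing
$$
e^{2\sigma(t)|\xi|}|\xi|^{a}|\hat{\eta}(\xi)|^2 = \Bigl(e^{2\sigma(t)|\xi|}|\xi|^{b}|\hat{\eta}(\xi)|^2\Bigr)^{a/b}\Bigl(e^{2\sigma(t)|\xi|}|\hat{\eta}(\xi)|^2\Bigr)^{(b-a)/b},
$$
which is a valid factorization because $a/b+(b-a)/b=1$ and the two bracketed quantities are nonnegative. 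Applying Hölder's inequality for series with conjugate exponents $p=b/a$ and $q=b/(b-a)$ yields
$$
\sum_{\xi} e^{2\sigma(t)|\xi|}|\xi|^{a}|\hat{\eta}(\xi)|^2 \leq \Bigl(\sum_{\xi} e^{2\sigma(t)|\xi|}|\xi|^{b}|\hat{\eta}(\xi)|^2\Bigr)^{a/b}\Bigl(\sum_{\xi} e^{2\sigma(t)|\xi|}|\hat{\eta}(\xi)|^2\Bigr)^{(b-a)/b}.
$$

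To finish, I would multiply through by $4\pi^2$ and absorb the constants back into the $\|\cdot\|_{\sigma(t)}$ norms on the right-hand side; since $(4\pi^2)^{a/b}(4\pi^2)^{(b-a)/b}=4\pi^2$, the factor matches exactly and one obtains the claimed bound
$$
\norm{\Lambda^{a/2}\eta}^2_{\sigma(t)}\leq \norm{\Lambda^{b/2}\eta}^{2a/b}_{\sigma(t)}\,\norm{\eta}^{2-2a/b}_{\sigma(t)}.
$$
The case $a=0$ is trivial (the inequality becomes an equality), so I would only need to treat $0<a<b$, where both exponents $p,q$ are finite and Hölder applies without modification. There is no real obstacle here: the statement is a one-line consequence of Hölder once the norms are expanded in Fourier, and the weight $e^{2\sigma(t)|\xi|}$ plays a purely passive role because the same weight appears in all three norms; in particular no assumption on the sign, size or time-dependence of $\sigma(t)$ is needed beyond the standing hypothesis that the relevant series converge.
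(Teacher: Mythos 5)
Your proof is correct and follows essentially the same route as the paper: expand the $\sigma(t)$-norms as weighted $\ell^2$ sums over $\ZZ^2$, factor the summand with exponents $a/b$ and $(b-a)/b$, and apply H\"older for series with conjugate exponents $b/a$ and $b/(b-a)$. The remarks about the constant $4\pi^2$ and the trivial case $a=0$ are fine but not needed; there is nothing to add.
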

\begin{proof}[proof]
\begin{align*}
\norm{\Lambda^{\frac{a}{2}} \eta }^{2}_{\sigma(t)} 
&=\norm{\etl \Lambda^{\frac{a}{2}} \eta}^{2}_{L^{2}}\\
&= \sum_{\xi \in \ZZ^{2}} e^{2\sigma(t) |\xi|} |\xi|^{a} |\hat{\eta}(\xi)|^{2}\\
&= \sum_{\xi \in \ZZ^{2}} e^{\sigma(t) |\xi| \frac{2a}{b}} |\xi|^{a} |\hat{\eta}(\xi)|^{\frac{2 a}{b}} e^{\sigma(t) |\xi| (2-\frac{2a}{b})} |\hat{\eta}(\xi)|^{2-\frac{2 a}{b}} \\
&\leq \left(\sum_{\xi \in \ZZ^{2}} \left(e^{\sigma(t) |\xi| \frac{2a}{b}} |\xi|^{a} |\hat{\eta}(\xi)|^{\frac{2 a}{b}}\right)^{\frac{b}{a}} \right)^{\frac{a}{b}}  \left( \sum_{\xi \in \ZZ^{2}} \left(e^{\sigma(t) |\xi| (2-\frac{2a}{b})} |\hat{\eta}(\xi)|^{2-\frac{2 a}{b}}\right)^{\frac{b}{b-a}} \right)^{\frac{b-a}{b}}\\
&\leq \left(\sum_{\xi \in \ZZ^{2}} e^{2\sigma(t) |\xi|} |\xi|^{b} |\hat{\eta}(\xi)|^{2} \right)^{\frac{a}{b}} 
\left(\sum_{\xi \in \ZZ^{2}} e^{2\sigma(t) |\xi|} |\hat{\eta}(\xi)|^{2} \right)^{\frac{b-a}{b}}\\
&\leq \norm{\Lambda^{\frac{b}{2}} \eta}_{\sigma(t)}^{\frac{2 a}{b}} \norm{\eta}_{\sigma(t)}^{2-\frac{2a}{b}}.
\end{align*}
\end{proof}
And an auxiliary lemma estimating the nonlinear term:
\begin{lem}\label{nonlinearterm}
$
| \langle \eta\eta_{x} , \eta \rangle_{\sigma(t)} | \leq c \norm{\Lambda \eta }_{\sigma(t)} \norm{ \Lambda^{\frac{1}{2}} \eta}_{\sigma(t)}^{2}.
$
\end{lem}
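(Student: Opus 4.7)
The strategy is the classical Foias--Temam/Collet--Eckmann--Epstein--Stubbe Gevrey estimate for a quadratic nonlinearity. The plan is to pass to the Fourier side, exploit the subadditivity $|\xi|\leq|\zeta|+|\xi-\zeta|$ to split the exponential weight across the convolution, reassemble the resulting trilinear sum as a physical-space integral, and then close with H\"older together with the critical two-dimensional Sobolev embedding $\dot H^{1/2}(\TT^{2})\hookrightarrow L^{4}(\TT^{2})$.

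Concretely, first I would expand
$$
\langle \eta\eta_{x},\eta\rangle_{\sigma(t)}=4\pi^{2}\sum_{\xi\in\ZZ^{2}}e^{2\sigma(t)|\xi|}\widehat{\eta\eta_{x}}(\xi)\overline{\hat{\eta}(\xi)},\qquad \widehat{\eta\eta_{x}}(\xi)=i\sum_{\zeta\in\ZZ^{2}}\zeta_{1}\hat{\eta}(\zeta)\hat{\eta}(\xi-\zeta),
$$
bound $|\zeta_{1}|\leq |\zeta|$, and use the subadditivity of $|\cdot|$ together with the identity $e^{2\sigma(t)|\xi|}=e^{\sigma(t)|\xi|}e^{\sigma(t)|\xi|}$ to split the weight as $e^{2\sigma(t)|\xi|}\leq e^{\sigma(t)|\xi|}e^{\sigma(t)|\zeta|}e^{\sigma(t)|\xi-\zeta|}$. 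Introducing the non-negative, even Fourier coefficients $\hat{U}(\xi):=e^{\sigma(t)|\xi|}|\hat{\eta}(\xi)|$ and $\hat{V}(\xi):=|\xi|\hat{U}(\xi)$ (evenness coming from $\eta$ being real, so that $|\hat{\eta}(-\xi)|=|\hat{\eta}(\xi)|$), this produces the pointwise bound
$$
|\langle \eta\eta_{x},\eta\rangle_{\sigma(t)}|\leq 4\pi^{2}\sum_{\xi,\zeta}\hat{U}(\xi)\hat{V}(\zeta)\hat{U}(\xi-\zeta).
$$
The corresponding inverse Fourier series $\tilde{U}$ and $\tilde{V}$ are then real-valued periodic functions with $\|\tilde{U}\|_{L^{2}}=\|\eta\|_{\sigma(t)}$ and $\|\tilde{V}\|_{L^{2}}=\|\Lambda\eta\|_{\sigma(t)}$.

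Second, I would recognize the trilinear sum as a Plancherel pairing between $\hat{U}$ and the Fourier coefficients of the product $\tilde{U}\tilde{V}$, so that
$$
4\pi^{2}\sum_{\xi,\zeta}\hat{U}(\xi)\hat{V}(\zeta)\hat{U}(\xi-\zeta)=\int_{\TT^{2}}\tilde{V}(x)\,\tilde{U}(x)^{2}\,dx.
$$
H\"older bounds this integral by $\|\tilde{V}\|_{L^{2}}\|\tilde{U}\|_{L^{4}}^{2}$. The zero-mean assumption on $\eta$ gives $\hat{U}(0)=0$, so the critical 2D Sobolev embedding yields
$$
\|\tilde{U}\|_{L^{4}}^{2}\leq C\|\Lambda^{1/2}\tilde{U}\|_{L^{2}}^{2}=C\|\Lambda^{1/2}\eta\|_{\sigma(t)}^{2},
$$
and combining these ingredients produces exactly the claimed bound.

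The only genuinely delicate point is this last Sobolev embedding, which is scaling-critical in dimension two: one must work with the homogeneous space $\dot H^{1/2}$, and hence exploit the zero-mean normalization already built into the functional setting, in order to avoid an additional lower-order $\|\eta\|_{\sigma(t)}$ factor. Such a factor would destroy the homogeneity that is essential when the lemma is fed, in the next step, into the differential inequality that absorbs the nonlinear contribution into the parabolic dissipation $\|\Delta\eta\|_{\sigma(t)}^{2}$.
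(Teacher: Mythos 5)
Your proposal is correct and follows essentially the same route as the paper: expand on the Fourier side, use $|\xi|\leq|\zeta|+|\xi-\zeta|$ to distribute the Gevrey weight, majorize by a trilinear sum with non-negative coefficients realized as a physical-space integral, and close with H\"older ($L^{2}\times L^{4}\times L^{4}$) plus the embedding $H^{1/2}(\TT^{2})\hookrightarrow L^{4}(\TT^{2})$. The only (minor) differences are cosmetic: you make explicit the evenness/reality of the majorant functions and the role of the zero-mean condition in using the homogeneous $\dot H^{1/2}$ norm, points the paper leaves implicit.
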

\begin{proof}[proof]
We first denote $\eta^{*}= \etl \eta $, then 
$$ \widehat{\eta^{*}(j)}= e^{\sigma(t) |j|} \hat{\eta}(j). $$
By the definition of Fourier series, we have 
$$
\eta = \sum_{j \in \ZZ^{2}} \widehat{\eta}(j) e^{ij\cdot x},\;\;\; \etl\eta = \sum_{j \in \ZZ^{2}} \widehat{\eta}(j) e^{ij\cdot x} e^{\sigma(t) |j|}.
$$
In fact,
\begin{align*}
\langle \eta\eta_{x} , \eta \rangle_{\sigma(t)} 
& = (2 \pi)^{2} i \sum_{\ell\in\mathbb{Z}^2}\sum_{j\in\mathbb{Z}^2} (\widehat{\eta}(j) \widehat{\eta}(\ell-j)  j_1 \overline{\widehat{\eta}}(\ell) e^{ 2 \sigma(t) |\ell|}\\
&= (2 \pi)^{2} i \sum_{\ell\in\mathbb{Z}^2}\sum_{j\in\mathbb{Z}^2} \left(\widehat{\eta^{*}}(j) \widehat{\eta^{*}}(\ell-j)\right) j_1 \overline{\widehat{\eta^{*}}}(\ell) e^{ \sigma(t) (|\ell| - |j|-|\ell-j|)}.
\end{align*}
Since $|\ell|\leq |j| + |\ell-j|$ leads to $|\ell| - |j|-|\ell-j| \leq 0$, we have that
$$ e^{ \sigma(t)(|\ell| - |j|-|\ell-j|)} \leq 1.$$
Moreover,
\begin{align*}
\big{|} \langle \eta\eta_{x} , \eta \rangle_{\sigma(t)}  \big{|} 
&\leq (2 \pi)^{2} \bigg{|}i \sum_{\ell\in\mathbb{Z}^2}\sum_{j\in\mathbb{Z}^2} \left(\widehat{\eta^{*}}(j) \widehat{\eta^{*}}(\ell-j)\right) j_1 \overline{\widehat{\eta^{*}}}(\ell) e^{ \sigma(t) (|\ell| - |j|-|\ell-j|)}\bigg{|}\\
&\leq(2 \pi)^{2} \sum_{\ell\in\mathbb{Z}^2}\sum_{j\in\mathbb{Z}^2} |\widehat{\eta^{*}}(j)| |\widehat{\eta^{*}}(\ell-j)| |j| |\overline{\widehat{\eta^{*}}}(\ell)|\\
&  = (2 \pi)^{2} \sum_{k+j = \ell }  |\widehat{\eta^{*}}(j)| |j| |\widehat{\eta^{*}}(k)|  |\overline{\widehat{\eta^{*}}}(\ell)|\\
& = \int_{\mathbb{T}^2} \phi(x) \theta(x) \overline{\psi}(x) dx
\end{align*}
 where 
\begin{align*}
 \phi(x) = \sum_{j \in \mathbb{Z}^2} |\widehat{\eta^{*}}(j)| |j| e^{ijx}, \quad
 \theta(x) = \sum_{k \in \mathbb{Z}^2} |\widehat{\eta^{*}}(k)| e^{ikx}, \quad
 \psi(x) = \sum_{\ell \in \mathbb{Z}^2} |\widehat{\eta^{*}}(\ell)| e^{i \ell x} .
\end{align*}
Notice that $|\hat{\phi}(j)| = |j| |\hat{\eta^{*}} (j)|$ and $|\hat{\theta}(k)| = |\hat{\eta^{*}} (k)|$. We now bound the last integral by the H\"older inequality, 
\begin{align*}
  \int_{\mathbb{T}^2} \phi(x) \theta(x) \overline{\psi}(x) dx
  & \leq \norm{\phi(x)}_{L^2}  \norm{\theta(x)}_{L^4}  \norm{\psi(x)}_{L^4}\\
  & \leq c \norm{\phi(x)}_{L^2} \norm{\theta(x)}^2_{H^{\frac{1}{2}}}\\
  & =  c \left(\sum_{j \in \mathbb{Z}^2} |\hat{\phi}(j)|^2 \right)^{\frac{1}{2}} \sum_{j \in \mathbb{Z}^2}  \left(|k|^{\frac{1}{2}} |\hat{\theta}(k)|\right)^2 \\
  & =  c \left(\sum_{j \in \mathbb{Z}^2} |j|^2 |\hat{\eta^{*}} (j)|^2 \right)^{\frac{1}{2}} \sum_{j \in \mathbb{Z}^2}  \left(|k|^{\frac{1}{2}} |\hat{\eta^{*}} (k)|\right)^2 \\
  & = c \norm{\Lambda \eta^{*}}_{L^2} \norm{\Lambda^{\frac{1}{2}} \eta^{*}}^2_{L^2}\\ 
   & \leq c\norm{ \Lambda^{\frac{1}{2}} \eta}_{\sigma(t)}^{2} \norm{\Lambda \eta}_{\sigma(t)}
\end{align*}
where we used the Sobolev embedding $H^{\frac{1}{2}}(\mathbb{T}^2) \hookrightarrow L^4(\mathbb{T}^2) $ and the Plancherel theorem in the computation above.
\end{proof}
We begin now the proof of theorem \ref{analyticteo}:
\begin{proof}[Proof of Theorem \ref{analyticteo}]
We first take inner product of \eqref{KStwo-dimension} with $\eta(t)$ in Gevrey class $G$,
\begin{equation}\label{KSinnerpro}
\langle \eta_{t} , \eta \rangle_{\sigma(t)} + \langle \eta \eta_{x}, \eta \rangle_{\sigma(t)}  + \langle (\beta-1)  \eta_{xx}-\eta_{yy}- \delta \Lambda^{3}(\eta)+ \ep \Delta^{2} \eta , \eta \rangle_{\sigma(t)} = 0
\end{equation}
Note that 
$$
\frac{1}{2} \frac{d}{dt} \langle \eta , \eta \rangle_{\sigma(t)} = \sigma'(t) \Lambda \langle \eta , \eta \rangle_{\sigma(t)} + \langle \eta_{t} , \eta \rangle_{\sigma(t)}
$$
then we have 
$$\langle \eta_{t} , \eta \rangle_{\sigma(t)}= \frac{1}{2} \frac{d}{dt} \langle \eta , \eta \rangle_{\sigma(t)} - \sigma'(t) \Lambda \langle \eta , \eta \rangle_{\sigma(t)}. $$
Substituting this into \eqref{KSinnerpro}, 
\begin{align*}
\frac{1}{2} \frac{d}{dt} \langle \eta , \eta \rangle_{\sigma(t)} 
&= \sigma'(t) \langle \Lambda \eta , \eta \rangle_{\sigma(t)}- \langle (\beta-1)  \eta_{xx}-\eta_{yy}- \delta \Lambda^{3}(\eta)+ \ep \Delta^{2} \eta , \eta \rangle_{\sigma(t)}  - \langle \eta \eta_{x}, \eta \rangle_{\sigma(t)} \\
&\leq \sigma'(t) \norm{\Lambda^{\frac{1}{2}} \eta }_{\sigma(t)}^{2} + \beta \norm{\Lambda \eta }_{\sigma(t)}^{2} +\delta \norm{\Lambda^{\frac{3}{2}} \eta }_{\sigma(t)}^{2}- \ep \norm{\Lambda^{2} \eta }_{\sigma(t)}^{2} + c \norm{ \Lambda \eta}_{\sigma(t)} \norm{\Lambda^{\frac{1}{2}} \eta}_{\sigma(t)}^{2} \\
&\leq \sigma'(t) \norm{\Lambda^{2} \eta }_{\sigma(t)}^{\frac{1}{2}} \norm{\eta }_{\sigma(t)}^{\frac{3}{2}} + \beta  \norm{\Lambda^{2} \eta }_{\sigma(t)} \norm{\eta }_{\sigma(t)} +\delta  \norm{\Lambda^{2} \eta }_{\sigma(t)}^{\frac{3}{2}} \norm{\eta }_{\sigma(t)}^{\frac{1}{2}}\\
&\;\;- \ep \norm{\Lambda^{2} \eta }_{\sigma(t)}^{2}+ c \norm{ \Lambda^{2} \eta}_{\sigma(t)} \norm{\eta}_{\sigma(t)}^{2}.
\end{align*}
By the Young inequality, we have 
\begin{equation}{\label{eta-sigma-estimation}}
\frac{d}{dt} \norm{\eta }_{\sigma(t)}^{2} 
\leq \left(D_{1} \left(\sigma'(t)\right)^{\frac{4}{3}} + D_{2} + D_{3} \right) \norm{\eta }_{\sigma(t)}^{2}+ D_{4} \norm{\eta}_{\sigma(t)}^{4}
 \end{equation}
where $D_{1} =(\frac{3}{2\ep})^{1/3}$, $D_{2} = \frac{9}{4\ep} \beta^{2}$, $D_{3}= (\frac{27}{4 \ep})^{3} \delta^{4} $, $D_{4}= \frac{9}{4 \ep} c^{2}$.

By the definition of $\sigma(t)= \min\{ \tanh(t), \tanh\left(\frac{T_0}{2}\right)\} $, we have 
$$\sigma'(t) \leq 1.$$
Inserting this into \eqref{eta-sigma-estimation}, we obtain 
\begin{equation*}
\frac{d}{dt} \left(1+ \norm{\eta }_{\sigma(t)}^{2} \right)
\leq K \left(1+ \norm{\eta }_{\sigma(t)}^{2}  \right)^{2}
\end{equation*}
with 
$$K = D_{1} + D_{2} + D_{3} + D_{4}.$$

Define $y_{1}(t) = 1+ \norm{\eta }_{\sigma(t)}^{2}$, it turns out to deal with the following ordinary differential inequality
    \begin{equation*}
    \left\{
    \begin{aligned}
   & y'_{1}(t) \leq K y_{1}(t)^{2} \\
   & y_{1}(0) =  1+ \norm{\eta_{0}}_{L^{2}}^{2} .
    \end{aligned}
    \right.
    \end{equation*} 
After solving this ODI, we obtain 
\begin{equation*}
\norm{\eta (t)}_{\sigma(t)}^{2} \leq  1+2 \norm{\eta_{0}}_{L^{2}}^{2}\;\text{for}\; t \in (0,T_0],
\end{equation*}
where
$$
T_0 = \frac{1}{2K \left(1+ \norm{\eta_{0}}_{L^{2}}^{2}\right)}.
$$

Moreover, according to theorem \ref{globalexistence} and  theorem \ref{absorbingset}, we know that the solution is unique, global in time and stays in a ball of Radius $R_{\ep,\beta,\delta}$ once it has entered it, that is to say, $\norm{\eta(t)}_{L^{2}}$ remains bounded for all time,
$$ \limsup_{t\to\infty} \norm{\eta(t)}_{L^2(\TT^{2})} \leq R_{\ep,\beta, \delta}. $$
So up to now, we already prove local analyticity of $\eta(t)$
\begin{equation}\label{localanalytic}
\norm{\eta (t)}_{\sigma(t)}^{2} \leq  1+2 C_{\ep, \beta,\delta,\eta_0}^{2} \;\text{for}\; t \in (0,T_0].
\end{equation}
In order to obtain global analyticity, we follow the previous idea and repeat the argument above starting at $\frac{T_0}{2}$. We consider time $t \in [\frac{T_0}{2},\frac{3T_0}{2}]$, and let $y_{2} (t)=  1+ \norm{\eta(t) }_{\sigma(t-\frac{T_0}{2})}^{2}$, so $y_{2}\left(\frac{T_0}{2}\right)=  1+ \norm{\eta\left(\frac{T_0}{2}\right)}_{L^{2}}^{2}$. Thus, solving the following ordinary differential inequality 
    \begin{equation*}
    \left\{
    \begin{aligned}
   & y'_{2}(t) \leq K y_{2}(t)^{2} \\
   & y_{2}\left(\frac{T_0}{2}\right) = 1+ \left\|\eta\left(\frac{T_0}{2}\right)\right\|_{L^{2}}^{2},
    \end{aligned}
    \right.
    \end{equation*} 
we have that,
\begin{align*}
\norm{\eta (t)}_{\sigma\left(t-\frac{T_0}{2}\right)}^{2} 
\leq  1+2 \left\|\eta\left(\frac{T_0}{2}\right)\right\|_{L^{2}}^{2}
\leq  1+ 2 C_{\ep,\beta,\delta,\eta_0}^{2},
\end{align*}
for time $t \in [\frac{T_0}{2},\frac{3T_0}{2}]$.

By the definition of $\sigma(t)$ and observe that $ \tanh(t)$ is strictly increasing, we know that $\sigma(t)$ remains being a constant after time  $\frac{T_0}{2}$, and this constant is $\sigma\left(\frac{T_0}{2}\right)= \tanh\left(\frac{T_0}{2}\right)$, so we choose $t=T_0$ in the inequality above, then 
$$
\norm{\eta (t)}_{\sigma\left(\frac{T_0}{2}\right)}^{2} \leq 1+2 C_{\ep,\beta,\delta,\eta_0}^{2} \;\;\text{for}\; t \in [T_0,\frac{3T_0}{2}].
$$
We mimic this argument by adding $\frac{T_0}{2}$ each time, so we  obtain that $\eta(t)$ is analytic in the time invervals $[\frac{3T_0}{2},2T_0], [2T_0, \frac{5T_0}{2}]...$ Recalling local analyticity \eqref{localanalytic}, we finally obtain 
\begin{equation*}
\norm{\eta (t)}_{\sigma(t)}^{2} \leq  1+2 C_{\ep,\beta,\delta,\eta_0}^{2} \;\text{for any}\; t > 0.
\end{equation*}
This completes our proof.
\end{proof}

\begin{remark}
It is of interest to point out that the global analyticity of the solution which we show here is better than the result in \cite{granero2015nonlocal}, where the argument given by the authors can be extended to prove the global analyticity of solutions of the Kuramoto-Sivashinsky equation, outside a set of time instants with zero measure.
\end{remark}
\section{Existence of attractor and the number of peaks}
In this section, we are interested in the existence of the attractor and its properties. By applying the Theorem 1.1 in \cite{temam1997infinite}, we can prove that the initial value problem \eqref{KStwo-dimension} possesses a compact global attractor in $H^2(\TT^2)$.
First, we denote by $S(t)$ the solution operator, where $S(t)\eta_{0}= \eta(x,y,t)$. 
\begin{defi}\label{compactflow}
The solution operator $S(t)\eta_{0} = \eta(x,y,t)$ defines a compact semiflow in $H^{2}$, if for every initial data $\eta_{0} \in H^{2}$, the following four statements holds:\\
(i) $S(0)\eta_{0}= \eta_{0}$;\\
(ii) $S(t+s)\eta_{0}=S(t)S(s)\eta_{0}, \;\text{for all}\; t,s$;\\
(iii) For every $t>0$, 
$$S(t)(\cdot) : H^{2} \to H^{2}$$
is continuous;\\
(iv) There exists $T^{*}>0$ such that $S(T^{*})$ is a compact operator, i.e. for every bounded set $B \subset H^{2}$, $S(T^{*})B \subset H^{2}$ is a compact set. 
\end{defi}
\begin{defi}
An attractor $\mathcal{A} \subset H^2 $ is a set that satisfies the following properties:\\
(i) $\mathcal{A}$ is an invariant set, i.e., $ S(t)\mathcal{A} = \mathcal{A}, \forall t \geq 0$;\\
(ii) there is an open neighborhood $\mathcal{B} \subset H^2$ of $\mathcal{A}$ such that, for every $\eta_{0}$ in $\mathcal{B}$, $S(t) \eta_{0}$ converges to $\mathcal{A}$ as $t \to \infty$ :
$$
\text{dist}\; (S(t) \eta_{0}, \mathcal{A}) \to 0 \;\; \text{as} \;\;t \to \infty.
$$
\end{defi}

We shall need the following lemma:
\begin{lem}\label{compactattactor}
Let $\eta_{0}\in H^{2}_{\text{od}}(\TT^{2})$ be the initial data. Then $S(\cdot)\eta_{0} \in \mathcal{C} ([0,T];H^{2}_{\text{od}}(\TT^2))$ defines a compact semiflow in $H^{2}_{\text{od}}(\TT^{2})$.   
\end{lem}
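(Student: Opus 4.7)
My plan is to verify the four clauses of Definition \ref{compactflow} one by one for the operator $S(t)$ acting on $H^2_{\text{od}}(\TT^2)$. At the outset I would also record that $S(t)$ stabilizes $H^2_{\text{od}}$: the transformation $\eta(x,y)\mapsto -\eta(-x,y)$ leaves the linear part of \eqref{KStwo-dimension} invariant (the operators $\pa_{xx},\pa_{yy},\Lambda^3,\Delta^2$ all commute with reflection in $x$) and also leaves the nonlinearity $\eta\eta_x$ invariant (the extra minus sign from one factor $\eta$ is cancelled by differentiating with respect to $-x$), so the uniqueness statement in Theorem \ref{globalexistence} propagates odd-in-$x$ symmetry. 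Clauses (i) and (ii) are then immediate: the first is the definition of the initial value problem and the second is the autonomy of \eqref{KStwo-dimension} combined, once again, with uniqueness.

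For clause (iii), I would mimic Step 5 of the proof of Theorem \ref{globalexistence} but at the $H^2$ level. Fix $T>0$ and two initial data $\eta_0,\mu_0\in H^2_{\text{od}}$ with $\|\eta_0\|_{H^2}+\|\mu_0\|_{H^2}\leq M$. By Theorem \ref{globalexistence} the corresponding solutions $\eta,\mu$ lie in $L^\infty(0,T;H^2)\cap L^2(0,T;H^4)$ with bounds depending only on $M,\ep,\beta,\delta,T$. Writing $\underline\eta=\eta-\mu$ and testing the difference equation
\[
\underline\eta_t+\tfrac12\bigl((\eta+\mu)\underline\eta\bigr)_x+(\beta-1)\underline\eta_{xx}-\underline\eta_{yy}-\delta\Lambda^3\underline\eta+\ep\Delta^2\underline\eta=0
\]
against $\Delta^2\underline\eta$, the linear terms of order lower than four are absorbed by a fraction of the $\ep\|\Delta^2\underline\eta\|_{L^2}^2$ dissipation by interpolation and Young's inequality exactly as in the previous energy estimates, while the nonlinear contribution is bounded, after Leibniz and one integration by parts, by $C\|\eta+\mu\|_{L^\infty}\|\underline\eta\|_{H^2}\|\Delta^2\underline\eta\|_{L^2}$ using the Sobolev embedding $H^2(\TT^2)\hookrightarrow L^\infty(\TT^2)$, and absorbed again. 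Gronwall's inequality then yields $\|S(t)\eta_0-S(t)\mu_0\|_{H^2}\leq e^{C(M,\ep,\beta,\delta)t}\|\eta_0-\mu_0\|_{H^2}$, which is $H^2$-Lipschitz continuity of $S(t)$.

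For clause (iv) I would fix any $T^*>0$ and invoke the analyticity bound of Theorem \ref{analyticteo}. Inspecting its proof, the constant $C_{\ep,\beta,\delta,\eta_0}$ depends on $\eta_0$ only through $\|\eta_0\|_{L^2}$, so for a bounded set $B\subset H^2_{\text{od}}$ a uniform Gevrey bound $\|e^{\sigma(T^*)\Lambda}S(T^*)\eta_0\|_{L^2}\leq R_*$ holds for every $\eta_0\in B$. The ball $\{\mu\in L^2(\TT^2):\|e^{\sigma(T^*)\Lambda}\mu\|_{L^2}\leq R_*\}$ is precompact in $H^2$ by a standard Fourier-truncation argument: the tail contribution is controlled by
\[
\sum_{|\xi|>N}|\xi|^4|\hat\mu(\xi)|^2\leq\Bigl(\sup_{|\xi|>N}|\xi|^4 e^{-2\sigma(T^*)|\xi|}\Bigr)R_*^2\xrightarrow[N\to\infty]{}0
\]
uniformly on the ball, while the low-frequency projection $|\xi|\leq N$ lies in a finite-dimensional subspace. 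Hence $\overline{S(T^*)B}$ is compact in $H^2$, which is clause (iv).

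The step I expect to be the most delicate is the $H^2$-continuity of clause (iii): the top-order nonlinear term $\int((\eta+\mu)\underline\eta)_x\,\Delta^2\underline\eta$ must be expanded by Leibniz and each piece integrated by parts so that, after Young's inequality, a strict positive fraction of the $\ep\|\Delta^2\underline\eta\|_{L^2}^2$ dissipation survives to absorb every higher-order contribution. By contrast, clause (iv) is essentially a corollary of the already proved global analyticity.
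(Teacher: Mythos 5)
Your proposal is correct and follows essentially the same route as the paper: the paper likewise disposes of (i)--(ii) as immediate, obtains (iii) ``by energy estimates'' (which you carry out explicitly at the $H^2$ level), and obtains (iv) by invoking the Gevrey/analyticity bound of Theorem \ref{analyticteo}, whose exponentially weighted ball is precompact in $H^2$ exactly as in your Fourier-truncation argument. The only difference is that you supply the details the paper leaves implicit, including the uniformity of the strip width $\sigma(T^*)$ over a bounded set $B$ (which holds since $T_0$ depends on $\eta_0$ only through $\|\eta_0\|_{L^2}$ and is bounded below on $B$).
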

\begin{proof}[proof]

In order to show that $S(t)\eta_{0}= \eta(\cdot,\cdot,t)$ defines a compact semiflow, we must verify (i)-(iv) in definition \ref{compactflow}. If we fix a $t_{0}$, the continuity of $S(t_{0})(\cdot)$ from $H^{2}_{\text{od}}$ to $H^{2}_{\text{od}}$ is strightforward by energy estimates. Then, as in Theorem \ref{absorbingsetH4}, we have the existence of an absorbing set in $H^{2}_{\text{od}}$-norm, so there exists $T^{*}$ such that 
$$\norm{\eta(t)}_{H^{2}_{\text{od}}} \leq R'_{\ep, \delta, \beta, \eta_{0}}, \;\forall t \geq T^{*}\;.$$ 
Since (i) and (ii) are obvious, we conclude our proof by invoking the analyticity of solutions.  
\end{proof}
\begin{teo}\label{attractor}
The system \eqref{KStwo-dimension} has a maximal, connected, compact attractor in the space $H^{2}_{\text{od}}(\TT^{2})$.
\end{teo}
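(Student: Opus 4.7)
The plan is to invoke the classical general criterion for the existence of a compact global attractor in a Banach space (namely Theorem 1.1 of \cite{temam1997infinite}), whose hypotheses we have essentially verified in the preceding results. The criterion requires a semigroup $S(t)$ on $H^2_{\text{od}}(\TT^2)$ satisfying: (a) $S(t)$ is a compact semiflow in the sense of Definition \ref{compactflow}, and (b) there exists a bounded absorbing set $\mathcal{B}_0 \subset H^2_{\text{od}}(\TT^2)$, meaning that for every bounded $B \subset H^2_{\text{od}}(\TT^2)$ there is a time $t_0(B)$ with $S(t)B \subset \mathcal{B}_0$ for all $t \geq t_0(B)$.

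First, I will collect the ingredients. The semiflow property and the required continuity and compactness are provided by Lemma \ref{compactattactor}; the odd symmetry in $x$ is preserved by \eqref{KStwo-dimension} so $S(t)$ indeed maps $H^2_{\text{od}}(\TT^2)$ into itself. The existence of an $H^2$-absorbing set is Theorem \ref{absorbingsetH4}; concretely, take
\[
\mathcal{B}_0 = \overline{B_{H^2_{\text{od}}}(0, R'_{\ep,\delta,\beta}+1)},
\]
which is a closed, bounded, convex (hence connected) subset of $H^2_{\text{od}}(\TT^2)$ that absorbs every bounded set.

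Next, I will define the candidate attractor in the usual way, as the $\omega$-limit of the absorbing set:
\[
\mathcal{A} = \omega(\mathcal{B}_0) = \bigcap_{s \geq 0} \overline{\bigcup_{t \geq s} S(t)\mathcal{B}_0}^{\,H^2_{\text{od}}}.
\]
From the compactness of $S(T^*)$ and the absorbing property, the sets $\overline{\bigcup_{t \geq s} S(t)\mathcal{B}_0}$ form a decreasing family of nonempty compact sets for $s$ large enough, so $\mathcal{A}$ is nonempty and compact. Invariance $S(t)\mathcal{A} = \mathcal{A}$ for $t \geq 0$ follows from the standard argument combining compactness and continuity of $S(t)$, and the attraction property $\mathrm{dist}(S(t)B, \mathcal{A}) \to 0$ for any bounded $B$ is a direct consequence of the absorbing property together with the compactness of $S(T^*)$. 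Maximality is automatic: any bounded invariant set $X$ is absorbed by $\mathcal{B}_0$, hence $X = S(t)X \subset \overline{\bigcup_{\tau \geq t} S(\tau)\mathcal{B}_0}$ for all $t$, so $X \subset \mathcal{A}$.

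For connectedness, I will use the standard observation that since $\mathcal{B}_0$ is connected and $S(t)$ is continuous, each set $\bigcup_{t \geq s} S(t)\mathcal{B}_0$ is connected, its closure is connected, and a decreasing intersection of compact connected sets in a metric space is connected; hence $\mathcal{A}$ is connected. The main technical point to check carefully is the compactness step in (iv) of Definition \ref{compactflow}, which I plan to justify either by the parabolic smoothing $L^2(0,T; H^4)$ of Theorem \ref{globalexistence} (giving compact embedding $H^4 \hookrightarrow H^2$ on $\TT^2$), or more directly by the instant analyticity of Theorem \ref{analyticteo}, which produces a uniform bound in a Gevrey class and thus relative compactness in $H^2_{\text{od}}(\TT^2)$ for $t > 0$. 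Once compactness is secured, the remainder of the argument is a verbatim application of Temam's theorem, so I expect no further obstacle.
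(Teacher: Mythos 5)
Your proposal follows exactly the paper's route: the paper's proof is a one-line appeal to Theorem 1.1 of \cite{temam1997infinite} together with Lemma \ref{compactattactor} (with the absorbing set supplied by Theorem \ref{absorbingsetH4}), and you invoke the same ingredients, merely unpacking the standard $\omega$-limit construction that Temam's theorem performs internally. The argument is correct and essentially identical in substance to the paper's.
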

\begin{proof}[proof]
By applying Theorem 1.1 in \cite{temam1997infinite} and Lemma \ref{compactattactor}.
\end{proof}

The rest of this section is devoted to studying a particular feature of the chaotic behavior of \eqref{KStwo-dimension}, namely, the number of spatial oscillations. We shall need the lemma proved by Gruji{\'c} in \cite{grujic2000spatial}, which gives us an effective method to study the number of peaks (see also \cite{granero2015nonlocal, burczak2016generalized}). We cannot use directly the method in \cite{grujic2000spatial}, mainly because Lemma 8.1 in \cite{grujic2000spatial} is quite suitable to bound the number of peaks in one space dimension, but not appropriate for our two-dimensional model.  

We first let $\eta(x,y)=f_{y}(x)$, where $f_y(x)$ depends on both $x$ and $y$. Then our original problem \eqref{KStwo-dimension} deduces to 
\begin{equation}\label{KSF}
\pat f_y + f_y \pax f_y + (\beta - 1 ) \pax^2 f_y + F_y (x) = 0, 
\end{equation}
where $F_y(x) = -\eta_{yy} (x,y)- \delta \Lambda^{3}(\eta)  (x,y)+ \ep \Delta^{2} \eta  (x,y)$ can be seen as a forcing term.

In the previous section, we have shown that $\eta(x,y,t)$ is analytic in a growing complex strip
$$
\mathbb{S}_{\sigma(t)}=\{(x,y)+i(\tilde{x}, \tilde{y}):(x,y) \in \TT^{2}, |(\tilde{x}, \tilde{y})| < \sigma(t) \}.
$$ 
Recalling that $\eta(x,y,t)=f_{y}(x,t)$, then $f_{y}(x,t)$ is analytic in 
$$
\mathbb{S}'_{\sigma(t)}=\{x+i\tilde{x}: x \in \TT, |\tilde{x}| < \sigma(t) \}.
$$ 

Now we use the following Lemma (from \cite{grujic2000spatial}):
\begin{lem}\label{numberlemma}
Fix $y$ in problem \eqref{KSF}. Let $\sigma > 0$, and let $f_{y}(x)$ be analytic in the neighbourhood of $\{ z=x+i\tilde{x}: | \Im z| \leq \sigma \}$ and be $2
\pi$-periodic in $x$-direction. Then, for any $\mu >0$, $\TT= I_{\mu} \cup R_{\mu}$, where $I_{\mu}$ is a union of at most $[\frac{4\pi}{\sigma}]$ intervals open in $\TT$, and 
\begin{itemize}
\item $|\pax f_{y}(x) | \leq \mu$, for all $x \in I_{\mu}$;
\item $\text{card } \{ x \in R_{\mu}: \pax f_{y}(x) = 0 \} \leq \frac{2}{\log 2} \frac{2\pi}{\sigma} \log \left(\frac{\max_{|\Im z| \leq \sigma} |\pax f_{y}(z)| }{\mu}\right)$.
\end{itemize}  
\end{lem}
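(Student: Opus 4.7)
The approach combines a simple covering of $\TT$ by short arcs with Jensen's formula for counting zeros of analytic functions. Set $g(x) := \pax f_y(x)$, which by hypothesis extends analytically to the closed strip $\Sigma_\sigma := \{z : |\text{Im}\, z| \leq \sigma\}$, and let $M := \max_{z \in \Sigma_\sigma} |g(z)|$, finite by continuity and the $2\pi$-periodicity in the real direction. Partition $\TT$ into $N$ consecutive arcs $J_1, \dots, J_N$, each of length strictly less than $\sigma/2$, arranged so that $N \leq [4\pi/\sigma]$.

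Sort the indices into $\mathcal{I} := \{k : \max_{J_k}|g| \leq \mu\}$ and its complement $\mathcal{R}$. Define $I_\mu$ as the interior (in $\TT$) of $\bigcup_{k \in \mathcal{I}} J_k$ and set $R_\mu := \TT \setminus I_\mu$, so that $R_\mu \subseteq \bigcup_{k \in \mathcal{R}} \overline{J_k}$. Then $I_\mu$ is a finite union of open intervals, at most $|\mathcal{I}| \leq N \leq [4\pi/\sigma]$ in number, on which $|g| \leq \mu$ by construction; this verifies the first bullet point. For the second, fix $k \in \mathcal{R}$, pick $x_k^\star \in J_k$ with $|g(x_k^\star)| > \mu$, and note that $\overline{B(x_k^\star, \sigma)} \subseteq \Sigma_\sigma$ since $x_k^\star$ is real. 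Jensen's formula on this disk, with inner radius $\sigma/2$, gives
\[
n_k \, \log 2 \;\leq\; \log \frac{\max_{|z - x_k^\star| = \sigma}|g(z)|}{|g(x_k^\star)|} \;\leq\; \log \frac{M}{\mu},
\]
where $n_k$ denotes the number of zeros of $g$ in $B(x_k^\star, \sigma/2)$. Since $x_k^\star \in J_k$ and $\mathrm{diam}(J_k) < \sigma/2$, the arc $J_k$ sits inside that open disk, so every real zero of $g$ in $J_k$ is captured by $n_k$. Summing over $k \in \mathcal{R}$ yields
\[
\text{card}\{x \in R_\mu : g(x) = 0\} \;\leq\; \sum_{k \in \mathcal{R}} n_k \;\leq\; \frac{N}{\log 2} \log\frac{M}{\mu} \;\leq\; \frac{2}{\log 2} \cdot \frac{2\pi}{\sigma} \log \frac{M}{\mu}.
\]

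I do not foresee a deep obstacle here; the analyticity on the strip is supplied by Theorem~\ref{analyticteo}, and the rest is a clean application of Jensen. The only real care is the bookkeeping of arc lengths and disk radii so that simultaneously (a) the Jensen disk of radius $\sigma$ fits inside the analyticity strip, (b) each arc lies inside the inner Jensen disk of radius $\sigma/2$ about the chosen base point, and (c) the ratio of outer to inner radii equals $2$, which is what produces the $1/\log 2$ factor in the constant. A minor nuisance is that real zeros lying on the boundary of the inner disk are not seen by Jensen; this is what forces the arcs to have length strictly below $\sigma/2$, but the final bound on the number of arcs is not affected since the estimates are open.
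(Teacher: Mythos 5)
First, a point of comparison: the paper does not prove this lemma at all --- it is imported from Gruji\'c \cite{grujic2000spatial} (the text introduces it with ``we use the following Lemma (from \cite{grujic2000spatial})''), so your proposal supplies an argument where the paper supplies only a citation. Your strategy --- cover $\TT$ by arcs of length comparable to $\sigma/2$, separate the arcs on which $|\pax f_y|\leq\mu$ from the rest, and count zeros on each bad arc by Jensen's formula on a disk of radius $\sigma$ centred at a real point where $|\pax f_y|>\mu$, the ratio $2$ of outer to inner radius producing the factor $1/\log 2$ --- is precisely the mechanism behind the cited one-dimensional lemma, so in spirit you have reconstructed the source's proof rather than found a different route. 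The Jensen step itself is sound: the disk of radius $\sigma$ about a real centre does lie in the strip, the centre is a non-zero of $g=\pax f_y$ by construction, and counting with multiplicity only overestimates the number of distinct real zeros.

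There is, however, one concrete arithmetic flaw in the covering. You cannot partition $\TT$ into $N\leq[4\pi/\sigma]$ arcs each of length strictly less than $\sigma/2$: such arcs have total length $<N\sigma/2\leq 2\pi$, so they cannot exhaust the circle. The minimal repair is to take $N=\lceil 4\pi/\sigma\rceil$ arcs of equal length $2\pi/N\leq\sigma/2$; then every step of your Jensen argument survives (the closed-disk form of Jensen's inequality absorbs the boundary-zero nuisance you mention), but the two constants in the conclusion are then only obtained up to an off-by-one. The number of components of $I_\mu$ is a priori bounded by $N=[4\pi/\sigma]+1$ in the generic case; you can recover $[4\pi/\sigma]$ by observing that on the circle the components of $I_\mu$ are separated by bad arcs, hence number at most $\min(\#\mathcal{I},\#\mathcal{R})\leq N/2$. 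The zero count, on the other hand, becomes $\lceil 4\pi/\sigma\rceil\frac{1}{\log 2}\log(M/\mu)$ rather than the stated $\frac{4\pi}{\sigma}\frac{1}{\log 2}\log(M/\mu)$, which is strictly larger whenever $4\pi/\sigma\notin\ZZ$. None of this affects how the lemma is used in Theorem~\ref{numberteo}, where only the order of magnitude in $1/\sigma$ matters, but as written your covering step is impossible and the exact constants of the statement are not quite reached; to get them verbatim you would need to follow the bookkeeping of \cite{grujic2000spatial} rather than the partition you describe.
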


With the help of the lemma above, we have our main result. 
\begin{teo}\label{numberteo}
Let $\eta$ be a solution of system (\ref{KStwo-dimension}) for initial data $\eta_{0} \in H^{2}_{\text{od}}(\TT^{2})$ and define $T_0$ as in Theorem \ref{analyticteo}. Then, $\TT= I \cup R$, where $I$ is a union of at most $[\frac{4\pi}{\tanh(\frac{T_0}{2})}]$ open intervals in $\TT$ and the following estimates hold for $t\geq\frac{T_0}{2}$,
$$
|\pax \eta(x,y,t) | \leq 1, \text{ for all }x \in I, y\in \TT
$$
and 
$$ \cardd \{ x \in R: |\nabla \eta(x,y,t)| = 0\} \leq \frac{4\pi}{\log 2} \frac{\log {C}_{\ep,\beta, \delta,\eta_0}}{\tanh\left(\frac{T_0}{2}\right)}.$$
where ${C}_{\ep,\beta, \delta,\eta_0}$ is a constant depending on $\ep,\beta, \delta, \eta_{0}$.
\end{teo}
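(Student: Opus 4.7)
The plan is to apply Gruji\'c's one-dimensional lemma (Lemma \ref{numberlemma}) slicewise in $y$ to the one-dimensional function $f_y(x):=\eta(x,y,t)$, using the uniform Gevrey bound provided by Theorem \ref{analyticteo} to extract both the width of analyticity and an absolute bound on $\pax f_y$ in the complex strip. Fix $t\geq T_0/2$ and $y\in\TT$. By construction of $\sigma(t)$ in Theorem \ref{analyticteo}, we have $\sigma(t)=\tanh(T_0/2)$ and $\norm{\eta(\cdot,\cdot,t)}_{\sigma(t)}^2\leq 1+2C_{\ep,\beta,\delta,\eta_0}^2$. Since the bound on $\sum_\xi |\hat\eta(\xi)|^2 e^{2\tanh(T_0/2)|\xi|}$ gives exponential decay of Fourier coefficients, each slice $f_y$ admits an analytic extension to the open complex strip $\{x+i\tilde x:|\tilde x|<\tanh(T_0/2)\}$ (and to a neighborhood of any closed substrip).

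Next, I would apply Lemma \ref{numberlemma} with $\mu=1$ and $\sigma=\tanh(T_0/2)$ (after an arbitrarily small shrinking if needed to meet the closed-strip hypothesis of the lemma). This yields at once the decomposition $\TT=I\cup R$ into at most $[4\pi/\tanh(T_0/2)]$ open intervals on which $|\pax f_y|\leq 1$, and the roots-count inequality
$$
\cardd\{x\in R:\pax f_y(x)=0\}\leq \frac{4\pi}{\log 2}\,\frac{\log\bigl(\max_{|\Im z|\leq \tanh(T_0/2)}|\pax f_y(z)|\bigr)}{\tanh(T_0/2)}.
$$
The implication for the gradient version is immediate: if $|\nabla\eta(x,y,t)|=0$ then in particular $\pax f_y(x)=0$, so $\{x\in R:|\na\eta|=0\}\subset\{x\in R:\pax f_y=0\}$.

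The main obstacle is the third step: bounding $\max_{|\Im z|\leq\tanh(T_0/2)}|\pax f_y(z)|$ by a constant $C_{\ep,\beta,\delta,\eta_0}$ uniformly in $y$ and $t\geq T_0/2$, starting only from the scalar Gevrey-$L^2$ bound of Theorem \ref{analyticteo}. For $z=x+i\tilde x$, expanding in Fourier series gives
$$
|\pax f_y(z)|\leq \sum_{\xi\in\ZZ^2}|\xi_1||\hat\eta(\xi)|e^{|\xi||\tilde x|},
$$
and Cauchy--Schwarz with weight $(1+|\xi|)^{-a}$ for $a>2$ produces an upper bound in terms of a Gevrey--$H^{a}$-type norm, which is \emph{not} controlled directly by Theorem \ref{analyticteo}. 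The remedy is to apply the lemma on a strip $|\Im z|\leq\tilde\sigma$ with $\tilde\sigma<\tanh(T_0/2)$: the exponential gap converts polynomial weights into a uniformly bounded factor via $(1+|\xi|)^{2a}e^{-2|\xi|(\tanh(T_0/2)-\tilde\sigma)}\leq C$, so the higher Gevrey--Sobolev norm at $\tilde\sigma$ is controlled by the Gevrey--$L^2$ norm at $\tanh(T_0/2)$, and hence by $C_{\ep,\beta,\delta,\eta_0}$.

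Finally, combining the uniform sup bound with the roots-count from Lemma \ref{numberlemma} and substituting into the logarithm yields the stated inequality. Taking $\tilde\sigma\uparrow\tanh(T_0/2)$ (or absorbing the mismatch into the constant $C_{\ep,\beta,\delta,\eta_0}$) gives the partition and critical-point count exactly in the form claimed in Theorem \ref{numberteo}, with bounds independent of $y\in\TT$ and of $t\geq T_0/2$.
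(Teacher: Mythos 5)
Your proposal is correct and follows essentially the same route as the paper: slice in $y$, apply Gruji\'c's lemma with $\mu=1$, observe that $|\nabla\eta|=0$ forces $\pax f_y=0$, and control $\max_{|\Im z|\leq\tilde\sigma}|\pax f_y(z)|$ by the Gevrey--$L^2$ bound of Theorem \ref{analyticteo} via Cauchy--Schwarz and the exponential gap between $\tilde\sigma$ and $\sigma(t)$ (the paper takes $\tilde\sigma=\sigma(t)/2$ and uses $|\xi|^3e^{\sigma(t)|\xi|/2}\leq Ce^{\sigma(t)|\xi|}$ together with $\sum_\xi|\xi|^{-4}<\infty$). The only difference is cosmetic: you leave $\tilde\sigma$ generic and note the mismatch must be absorbed into the constants, a point the paper glosses over in exactly the same way.
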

\begin{proof}[proof]
From the results of Theorem \ref{analyticteo} and Theorem \ref{attractor}, we know that the system has an attractor and the solution $\eta$ is analytic at least in the strip of width $ \sigma(t)$. 

Now, we can apply Lemma \ref{numberlemma} with $\mu = 1 $ and bound
\begin{align*}
\cardd \{ x \in R: |\nabla \eta(x,y,t)|=0\} 
&\leq \cardd \{ x \in R: \pax \eta = 0\} \\
&\leq \frac{2}{\log 2} \frac{2\pi}{\sigma} \log \left(\frac{\max_{|\Im z| \leq \sigma} |\pax \eta(z,y,t)| }{\mu}\right)\\
&\leq \frac{4\pi}{\log 2} \frac{1}{\sigma} \log \left(C \norm{e^{\sigma(t) \Lambda} \eta }_{L^{2}}\right).
\end{align*}
In the last inequality above, we used the following estimate
\begin{align*}
\normm{\pax \eta (z,y,t)}_{L^{\infty}(|\Im z|\leq \frac{\sigma}{2})}
& =\normm{\pax \eta (x+i\tilde{x},y,t)}_{L^{\infty}(|\Im z|\leq \frac{\sigma}{2})} \\
& \leq \normm{\sum_{(\xi_{1},\xi_{2}) \in \ZZ^{2}}  |\xi_{1}|\widehat{\eta} (\xi_{1},\xi_{2},t) e^{i(x\xi_{1}+y\xi_{2})} e^{-\tilde{x}\xi_{1}} }_{L^{\infty}(|\Im z|\leq \frac{\sigma}{2})} \\
& \leq \normm{\sum_{(\xi_{1},\xi_{2}) \in \ZZ^{2}}  | \xi| |\widehat{\eta} (\xi_{1},\xi_{2},t) |  e^{|\tilde{x}||\xi|}}_{L^{\infty}(|\Im z|\leq \frac{\sigma}{2})} \\
& \leq \sum_{(\xi_{1},\xi_{2}) \in \ZZ^{2}} |\xi| |\widehat{\eta} (\xi_{1},\xi_{2},t)|  e^{\frac{\sigma(t)}{2}|\xi|}\\
& \leq \sum_{(\xi_{1},\xi_{2}) \in \ZZ^{2}} |\xi|^3|\widehat{\eta} (\xi_{1},\xi_{2},t)|  e^{\frac{\sigma(t)}{2}|\xi|} \frac{1}{|\xi|^2}\\
& \leq \sum_{(\xi_{1},\xi_{2}) \in \ZZ^{2}} \widehat{\eta} (\xi_{1},\xi_{2},t)|  e^{\sigma(t)|\xi|}  \frac{1}{|\xi|^2} \\
& \leq \norm{e^{\sigma(t) \Lambda} \eta }_{L^{2}}  \left(\sum_{(\xi_{1},\xi_{2}) \in \ZZ^{2}} \frac{1}{|\xi|^4} \right)^{\frac{1}{2}}\\
& \leq C \norm{e^{\sigma(t) \Lambda} \eta }_{L^{2}} 
\end{align*} 
where we used the fact that 
$$|\xi|^3  e^{\frac{\sigma(t)}{2}|\xi|} \leq C  e^{\sigma(t)|\xi|}.$$
Since $\eta$ has global analyticity 
$$\norm{e^{\sigma(t) \Lambda} \eta (t)}_{L^{2}}\leq  1+2 {C}_{\ep,\beta, \delta,\eta_0}, \;\forall\;t \geq 0,$$
we can conclude that for $t \geq \frac{T_0}{2}$
\begin{align*}
\cardd \{ x \in R: |\nabla \eta(x,y,t)|= 0\} 
\leq \frac{4\pi}{\log 2} \frac{\log {C}_{\ep,\beta, \delta,\eta_0}}{\tanh\left(\frac{T_0}{2}\right)}.
\end{align*}
where $C_{R_{\ep,\beta,\delta,\eta_0}}$ depends on $R_{\ep,\delta,\beta}$ and $\eta_0$.
\end{proof}
Theorem \ref{numberteo} gives us a bound of the number of wild spatial oscillations of the solution, then the following corollary is a direct result of it.
\begin{corol}
Let $\eta$ be a solution corresponding to the initial data $\eta_{0} \in H^{2}_{\text{od}}(\TT^{2})$, then for $t \geq \frac{T_0}{2}$, the number of peaks for $\eta$ can be bounded as
$$ \cardd \; \{\text{peaks for} \;\eta \} \leq \frac{4\pi}{\log 2} \frac{\log {C}_{\ep, \beta,\delta,\eta_0}}{\tanh\left(\frac{T_0}{2}\right)}.$$
where $C_{\ep,\beta, \delta,\eta_0}$ depends on $\ep,\beta,\delta,\eta_{0}$ and $T_0$ is defined as before.
\end{corol}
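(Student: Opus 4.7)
The plan is to observe that the corollary is essentially a direct translation of Theorem \ref{numberteo} into the language of peaks. By definition, a peak of $\eta(\cdot,y,t)$ (viewed as a function of $x$ with $y$ and $t$ fixed) is a point of local extremum, so necessarily a critical point of the one-dimensional slice $f_y(\cdot)$, i.e.\ a point where $\partial_x \eta = 0$. Hence
\[
\{\text{peaks of }\eta\}\subset\{x\in\TT:\partial_x\eta(x,y,t)=0\}\subset\{x\in\TT:|\nabla\eta(x,y,t)|=0\}\cup I,
\]
with $I$ as in Theorem \ref{numberteo}.

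The key observation is that in the ``flat'' part $I$ one has $|\partial_x\eta|\leq 1$, so any zero of $\partial_x\eta$ lying in $I$ corresponds to an essentially inactive critical point — not a genuine oscillation or peak of the chaotic profile. Consequently, the relevant peaks live in $R$, and their count is controlled by the second bullet of Lemma \ref{numberlemma} applied with $\mu=1$, which is exactly what yields the second inequality in Theorem \ref{numberteo}. I would therefore make this precise by counting zeros of $\partial_x\eta$ in $R$ (the peaks manifesting the chaotic dynamics) and then invoking Theorem \ref{numberteo} verbatim to get
\[
\cardd\{\text{peaks of }\eta\}\leq \cardd\{x\in R:|\nabla\eta(x,y,t)|=0\}\leq \frac{4\pi}{\log 2}\frac{\log C_{\ep,\beta,\delta,\eta_0}}{\tanh(T_0/2)},
\]
valid for every $t\geq T_0/2$ and every fixed $y\in\TT$.

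There is no real obstacle in the proof — the analytic heavy lifting has already been done: global analyticity in a strip of width $\sigma(t)=\tanh(T_0/2)$ for $t\geq T_0/2$ (Theorem \ref{analyticteo}) plus the global-in-time Gevrey bound $\|e^{\sigma(t)\Lambda}\eta(t)\|_{L^2}^2\leq 1+2C_{\ep,\beta,\delta,\eta_0}^2$ were used in Theorem \ref{numberteo} to convert Gruji\'c's complex-analytic counting lemma into the quantitative estimate above. The only conceptual point worth spelling out, and the one I would be most careful about, is the identification of ``peaks'' with critical points in $R$: one has to argue that peaks produced by the chaotic dynamics of \eqref{KStwo-dimension} are characterized by $|\partial_x\eta|$ exceeding the threshold $\mu=1$ near them, so they cannot be absorbed into the $O(1/\sigma(t))$ flat intervals of $I$. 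Once this is granted, the corollary follows from Theorem \ref{numberteo} with no additional computation.
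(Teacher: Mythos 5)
Your proposal is correct and matches the paper's intent: the paper offers no separate argument for this corollary, declaring it a direct consequence of Theorem \ref{numberteo}, and your reasoning (peaks are zeros of $\partial_x\eta$, those in the flat set $I$ where $|\partial_x\eta|\leq 1$ are not genuine oscillations, and those in $R$ are counted by the theorem) is precisely the implicit justification. You also correctly flag the only delicate point, namely the convention that ``peaks'' refers to the wild oscillations living in $R$ rather than the inactive critical points absorbed into $I$.
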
 

\section{Numerical simulations}
In this section, we show some numerical solutions of the initial-value problem \eqref{KStwo-dimension} with the following initial condition 
\begin{equation}\label{initialdata}
\eta_{0}(x,y) = - \sin x \left(\sin y+ e^{-y^2} \cos y\right).
\end{equation}

As the equation is periodic, we discretize the spatial part by means of a Fourier spectral method. Namely, taking the Fourier transformation of equation \eqref{KStwo-dimension}, we get 
\begin{equation*}
    \widehat{\eta}_t + \frac{i \xi_1}{2} \widehat{\eta^2}  + \left(- (\beta-1) \xi_1^2 + \xi_2^2 - \delta |\xi|^3 + \ep |\xi|^4\right) \widehat{\eta} = 0
\end{equation*}
where $|\xi| = \sqrt{\xi_1^2 + \xi_2^2}$. Once we discretize the spatial part of the PDE we get a system of ODE : 
\begin{equation}\label{ODE}
\widehat{\eta}_t = 	\mathbf{L}\, \widehat{\eta} + \mathbf{N}\, \widehat{\eta}
\end{equation}
with
\begin{equation*}
	(\mathbf{L}\, \widehat{\eta}) (\xi_1, \xi_2) =   \left( (\beta-1) \xi_1^2 - \xi_2^2 + \delta |\xi|^3 - \ep |\xi|^4\right) \widehat{\eta}
\end{equation*}
and 
\begin{equation*}
	(\mathbf{N}\, \widehat{\eta}) (\xi_1, \xi_2) = - \frac{i \xi_1}{2} \mathcal{F} \left(\left( \mathcal{F}^{-1}(\widehat{\eta} ) \right)^2 \right)
\end{equation*}
We then compute the numerical solution by using a fourth-order exponential time differencing (ETD-RK4) method that was first derived by Cox and Mathews in \cite{cox2002exponential} and then was improved by Kassam and Trefethen in \cite{kassam2005fourth}. 

In the following figures, we show a numerical solution of \eqref{ODE} with parameters $\beta=2, \delta=0.5, \epsilon=1$ and initial condition \eqref{initialdata}. We can see that the equation \eqref{KStwo-dimension} is very interesting from a dynamical systems point of view, as it is a PDE that can exhibit
chaotic solutions.
\begin{figure}
		\centering
		\includegraphics[width=12cm]{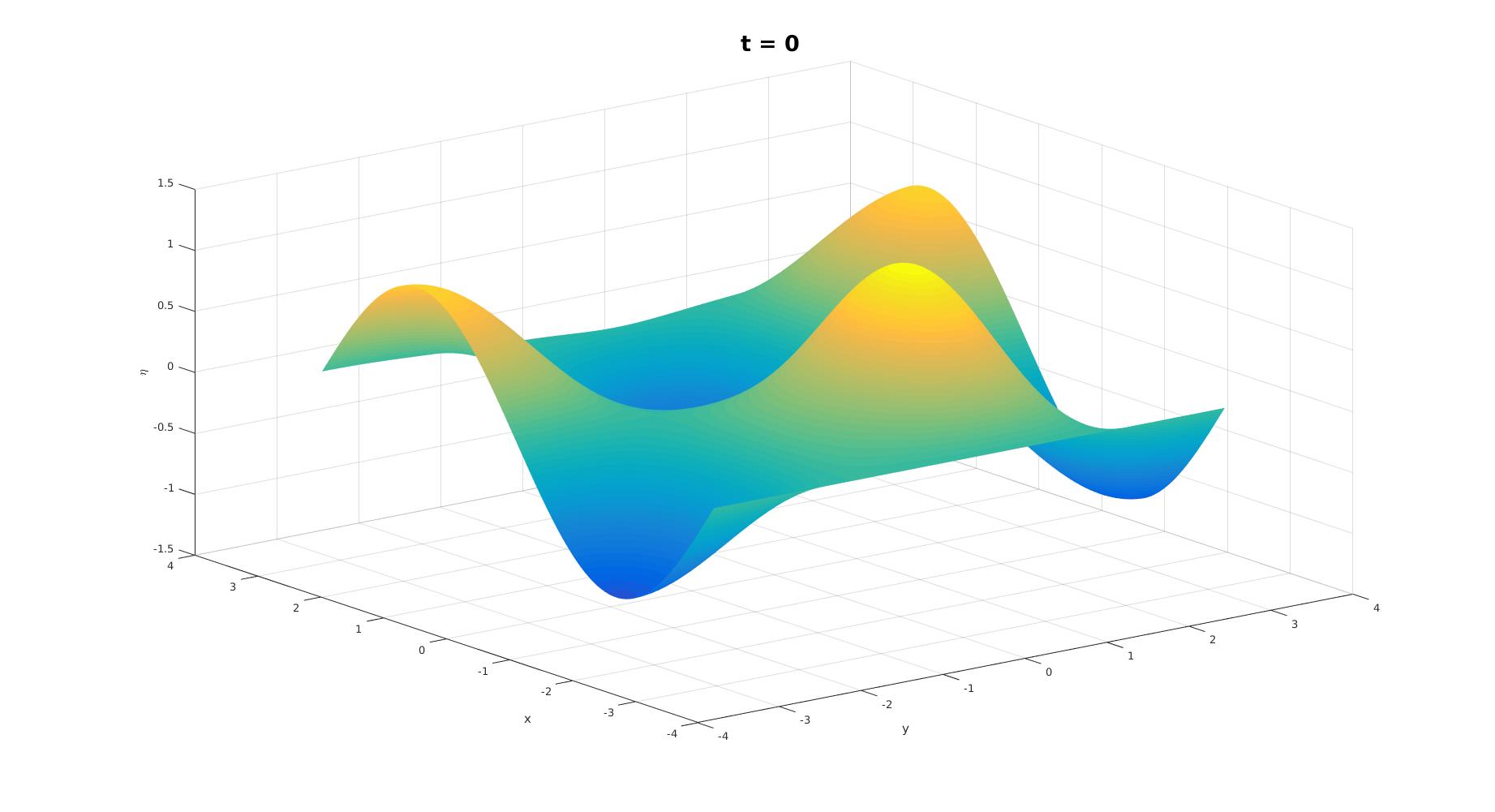}
	   	\caption{Initial data \eqref{initialdata}}
		\label{initial}	
\end{figure}

\begin{figure}[h]
	\centering
	\includegraphics[width=12cm]{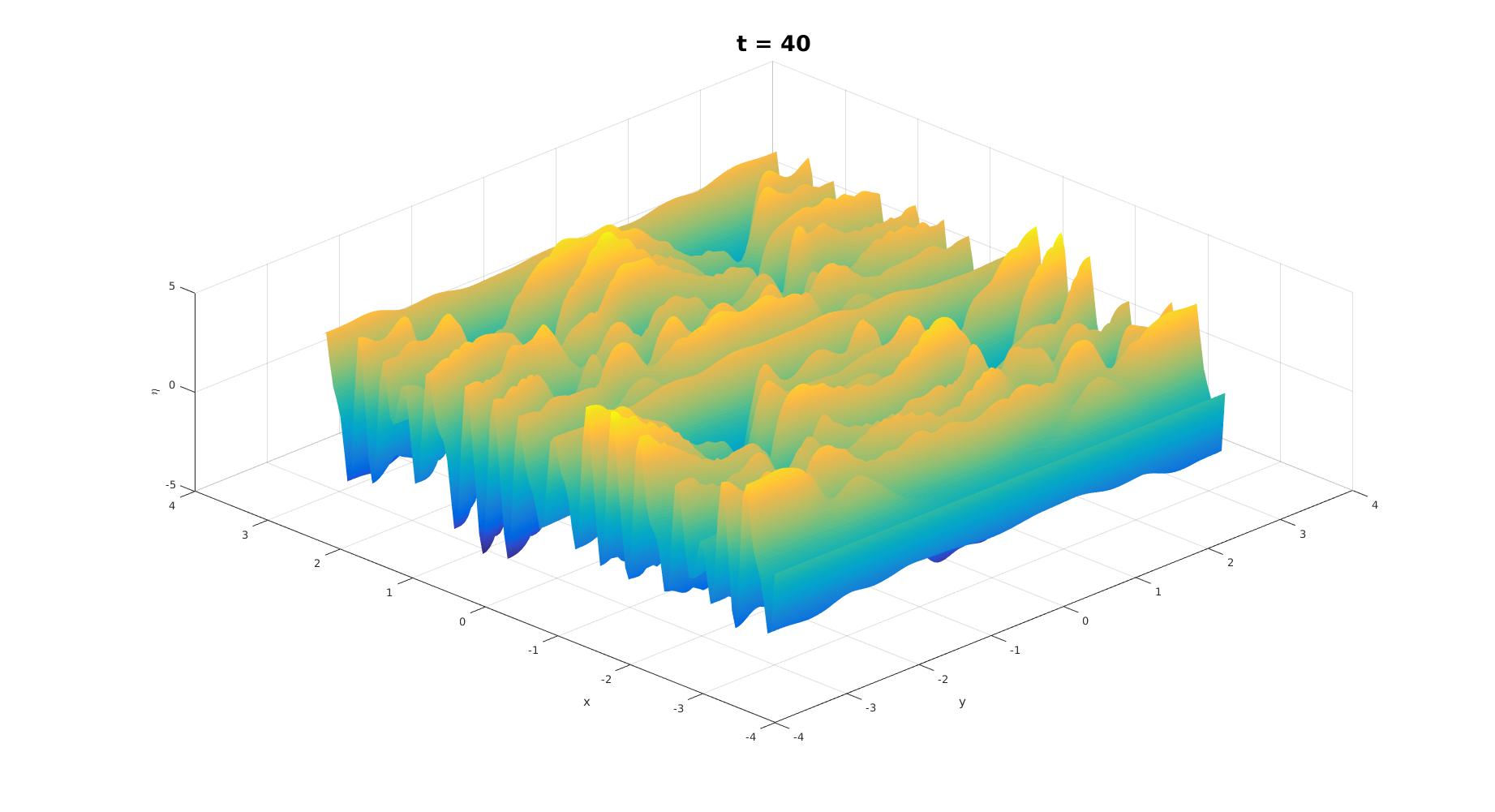}	
	\caption{Numerical solution of \eqref{ODE} for $\beta=2, \delta=0.5, \epsilon=1$ at $t=40$.}
    \label{end}	
\end{figure}

\begin{figure}
	\centering
	\includegraphics[width=15cm]{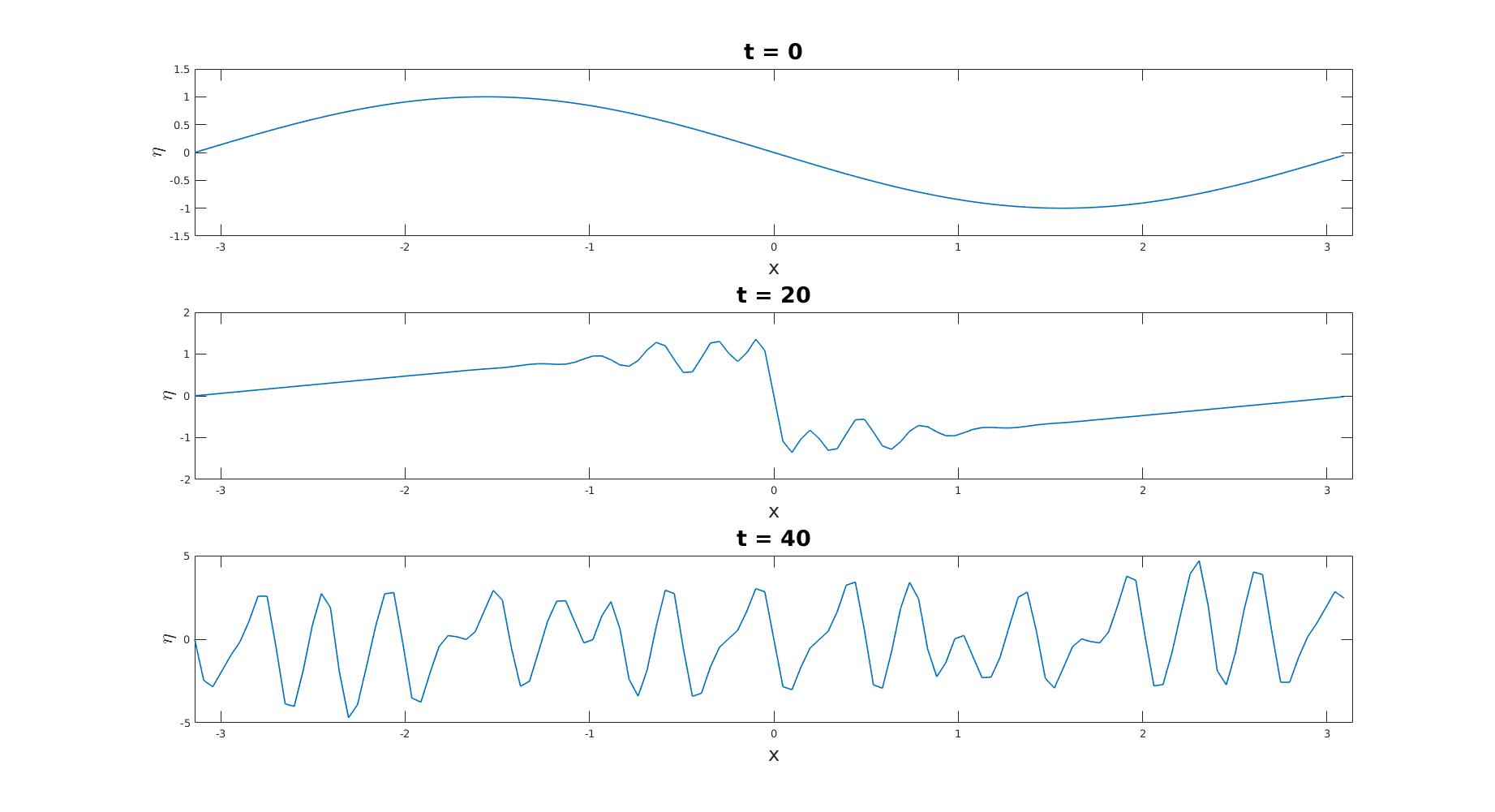}
	\caption{Numerical solution profile along $x$-direction for $y=0$ of \eqref{ODE} with $\beta=2, \delta=0.5, \epsilon=1$ and the same initial data as in Figure \ref{end} at $t=0, 20, 40$.}
	\label{cor_x}	
\end{figure}

\begin{figure}
	\centering
	\includegraphics[width=15cm]{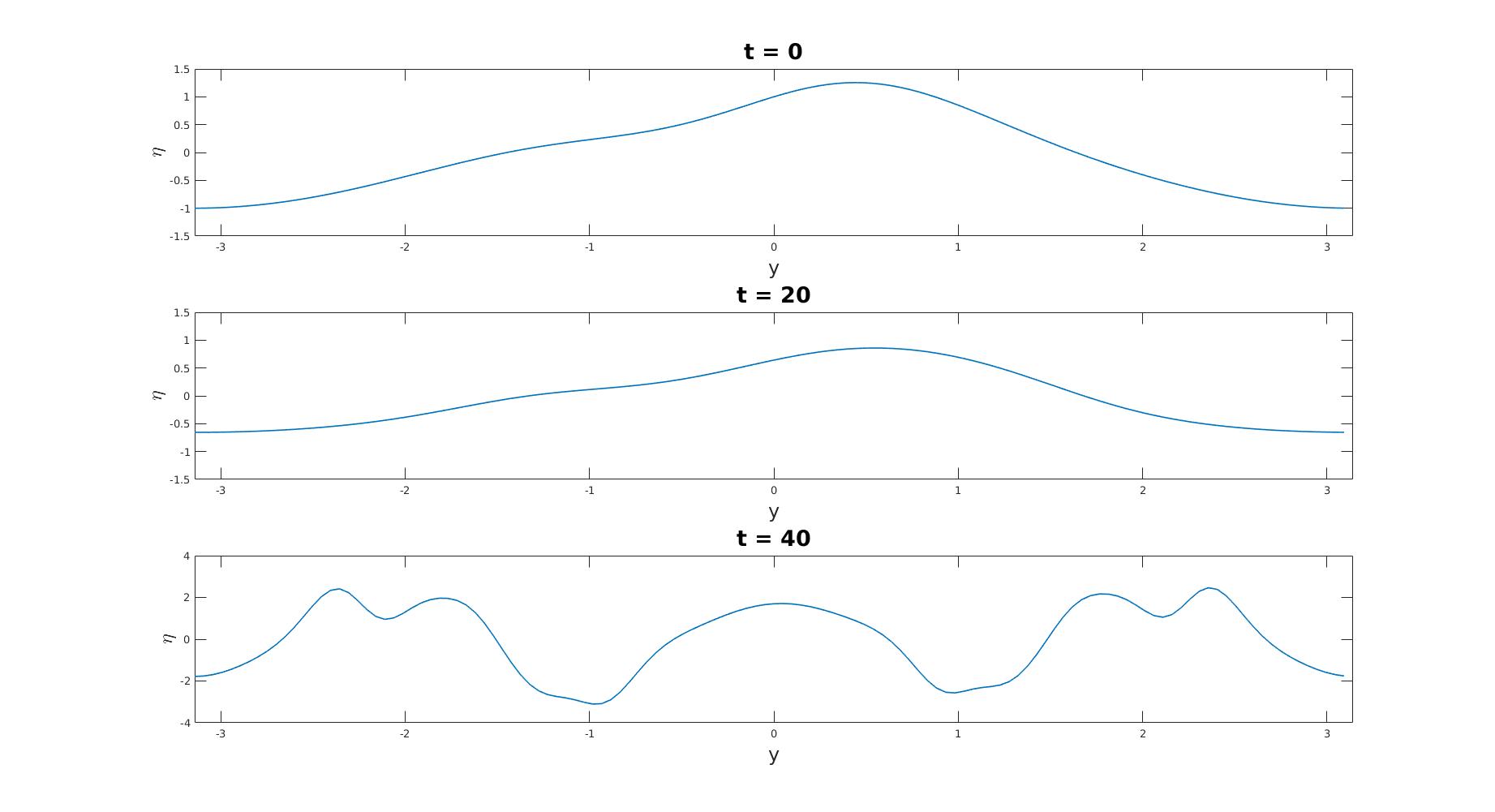}
	\caption{Numerical solution profile along $y$-direction for $x=\pi/2 $ of \eqref{ODE} with $\beta=2, \delta=0.5, \epsilon=1$ and the same initial data as in Figure \ref{end} at $t=0, 20, 40$.}
	\label{cor_y}	
\end{figure}

\section*{Acknowledgment}
The authors would like to express sincere gratitude to Ruben Tomlin for calling our attention to an error in our first manuscript, and thank Lorenzo Brandolese and 
Drago\c s Iftimie for providing helpful suggestions and comments. 
R.GB was supported by the LABEX MILYON (ANR-10-LABX-0070) of Universit\'e de Lyon, within the program ``Investissements d'Avenir" (ANR-11-IDEX-0007) operated by the French National Research Agency (ANR). J.H has been partially funded by the ANR project Dyficolti ANR-13-BS01-0003-01. 

\newpage

\bibliographystyle{abbrv}

\begin{thebibliography}{10}
	
	\bibitem{bronski2006uncertainty}
	J.~C. Bronski and T.~N. Gambill.
	\newblock Uncertainty estimates and ${L}^2$ bounds for the
	{K}uramoto-{S}ivashinsky equation.
	\newblock {\em Nonlinearity}, 19(9):2023, 2006.
	
	\bibitem{burczak2016generalized}
	J.~Burczak and R.~Granero-Belinch{\'o}n.
	\newblock On a generalized doubly parabolic {K}eller--{S}egel system in one
	spatial dimension.
	\newblock {\em Mathematical Models and Methods in Applied Sciences},
	26(01):111--160, 2016.
	
	\bibitem{cohen1976non}
	B.~I. Cohen, J.~Krommes, W.~Tang, and M.~Rosenbluth.
	\newblock Non-linear saturation of the dissipative trapped-ion mode by mode
	coupling.
	\newblock {\em Nuclear fusion}, 16(6):971, 1976.
	
	\bibitem{collet1993analyticity}
	P.~Collet, J.-P. Eckmann, H.~Epstein, and J.~Stubbe.
	\newblock Analyticity for the {K}uramoto-{S}ivashinsky equation.
	\newblock {\em Physica D: Nonlinear Phenomena}, 67(4):321--326, 1993.
	
	\bibitem{collet1993global}
	P.~Collet, J.-P. Eckmann, H.~Epstein, and J.~Stubbe.
	\newblock A global attracting set for the {K}uramoto-{S}ivashinsky equation.
	\newblock {\em Communications in mathematical physics}, 152(1):203--214, 1993.
	
	\bibitem{coward1993nonlinear}
	A.~V. Coward and P.~Hall.
	\newblock On the nonlinear interfacial instability of rotating core-annual
	flow.
	\newblock {\em Theoretical and Computational Fluid Dynamics}, 5(6):269--289,
	1993.
	
	\bibitem{cox2002exponential}
	S.~M. Cox and P.~C. Matthews.
	\newblock Exponential time differencing for stiff systems.
	\newblock {\em Journal of Computational Physics}, 176(2):430--455,2002.

	\bibitem{foias1989gevrey}
	C.~Foias and R.~Temam.
	\newblock Gevrey class regularity for the solutions of the {N}avier-{S}tokes
	equations.
	\newblock {\em Journal of Functional Analysis}, 87(2):359--369, 1989.
	
	\bibitem{frenkel1999wavy}
	A.~Frenkel and K.~Indireshkumar.
	\newblock Wavy film flows down an inclined plane: perturbation theory and
	general evolution equation for the film thickness.
	\newblock {\em Physical Review E}, 60(4):4143, 1999.
	
	\bibitem{giacomelli2005new}
	L.~Giacomelli and F.~Otto.
	\newblock New bounds for the {K}uramoto-{S}ivashinsky equation.
	\newblock {\em Communications on pure and applied mathematics}, 58(3):297--318,
	2005.
	
	\bibitem{gonzalez1996nonlinear}
	A.~Gonzalez and A.~Castellanos.
	\newblock Nonlinear electrohydrodynamic waves on films falling down an inclined
	plane.
	\newblock {\em Physical Review E}, 53(4):3573, 1996.
	
	\bibitem{goodman1994stability}
	J.~Goodman.
	\newblock Stability of the {K}uramoto-{S}ivashinsky and related systems.
	\newblock {\em Communications on Pure and Applied Mathematics}, 47(3):293--306,
	1994.
	
	\bibitem{granero2015nonlocal}
	R.~Granero-Belinch{\'o}n and J.~K. Hunter.
	\newblock On a nonlocal analog of the {K}uramoto-{S}ivashinsky equation.
	\newblock {\em Nonlinearity}, 28(4):1103, 2015.
	
	\bibitem{grujic2000spatial}
	Z.~Gruji{\'c}.
	\newblock Spatial analyticity on the global attractor for the
	{K}uramoto-{S}ivashinsky equation.
	\newblock {\em Journal of Dynamics and Differential Equations}, 12(1):217--228,
	2000.
	
	\bibitem{il1992global}
	J.~S. Il'Yashenko.
	\newblock Global analysis of the phase portrait for the
	{K}uramoto-{S}ivashinsky equation.
	\newblock {\em Journal of Dynamics and Differential Equations}, 4(4):585--615,
	1992.
	
	\bibitem{ioakim2014analyticity}
	X.~Ioakim and Y.-S. Smyrlis.
	\newblock Analyticity for {K}uramoto-{S}ivashinsky type equations and related
	systems.
	\newblock {\em Procedia IUTAM}, 11:69--80, 2014.
	
	\bibitem{james2017vortex}
	M.~James and M.~Wilczek.
	\newblock Vortex dynamics and lagrangian statistics in a model for active
	turbulence.
	\newblock {\em The European Physical Journal E}, 41(2):21, 2018.
	
	
	\bibitem{kassam2005fourth}
    A.-K. Kassam, and L.-N. Trefethen.
	\newblock Fourth-order time-stepping for stiff PDEs.
	\newblock {\em SIAM Journal on Scientific Computing}, 26(4):1214-1233, 2005.
	
	\bibitem{kuramoto1976persistent}
	Y.~Kuramoto and T.~Tsuzuki.
	\newblock Persistent propagation of concentration waves in dissipative media
	far from thermal equilibrium.
	\newblock {\em Progress of theoretical physics}, 55(2):356--369, 1976.
	
	\bibitem{laquey1975nonlinear}
	R.~E. LaQuey, S.~Mahajan, P.~Rutherford, and W.~Tang.
	\newblock Nonlinear saturation of the trapped-ion mode.
	\newblock {\em Physical Review Letters}, 34(7):391, 1975.
	
	\bibitem{lee1982nonlinear}
	Y.~Lee and H.~Chen.
	\newblock Nonlinear dynamical models of plasma turbulence.
	\newblock {\em Physica Scripta}, 1982(T2A):41, 1982.
	
	\bibitem{michelson1977nonlinear}
	D.~Michelson and G.~Sivashinsky.
	\newblock Nonlinear analysis of hydrodynamic instability in laminar
	flames—{II}. numerical experiments.
	\newblock {\em Acta Astronautica}, 4(11-12):1207--1221, 1977.
	
	\bibitem{nicolaenko1985some}
	B.~Nicolaenko, B.~Scheurer, and R.~Temam.
	\newblock Some global dynamical properties of the {K}uramoto-{S}ivashinsky
	equations: nonlinear stability and attractors.
	\newblock {\em Physica D: Nonlinear Phenomena}, 16(2):155--183, 1985.
	
	\bibitem{pinto1999nonlinear}
	F.~C. Pinto.
	\newblock Nonlinear stability and dynamical properties for a
	{K}uramoto-{S}ivashinsky equation in space dimension two.
	\newblock {\em Discrete \& Continuous Dynamical Systems-A}, 5(1):117--136,
	1999.
	
	\bibitem{pinto2001analyticity}
	F.~C. Pinto.
	\newblock Analyticity and {G}evrey class regularity for a
	{K}uramoto-{S}ivashinsky equation in space dimension two.
	\newblock {\em Applied mathematics letters}, 14(2):253--260, 2001.
	
	\bibitem{sivashinsky1977nonlinear}
	G.~Sivashinsky.
	\newblock Nonlinear analysis of hydrodynamic instability in laminar
	flames—{I}. derivation of basic equations.
	\newblock {\em Acta astronautica}, 4(11-12):1177--1206, 1977.
	
	\bibitem{sivashinsky1980irregular}
	G.~Sivashinsky and D.~Michelson.
	\newblock On irregular wavy flow of a liquid film down a vertical plane.
	\newblock {\em Progress of theoretical physics}, 63(6):2112--2114, 1980.
	
	\bibitem{temam1997infinite}
	R.~Temam.
	\newblock {\em Infinite-dimensional dynamical systems in mechanics and
		physics}, volume~68.
	\newblock Applied Mathematical Sciences. Springer-Verlag, 1997.
	
	\bibitem{tomlin2017three}
	R.~Tomlin, D.~Papageorgiou, and G.~Pavliotis.
	\newblock Three-dimensional wave evolution on electrified falling films.
	\newblock {\em Journal of Fluid Mechanics}, 822:54--79, 2017.
	
	\bibitem{topper1978approximate}
	J.~Topper and T.~Kawahara.
	\newblock Approximate equations for long nonlinear waves on a viscous fluid.
	\newblock {\em Journal of the Physical society of Japan}, 44(2):663--666, 1978.
	
	\bibitem{tseluiko2010dynamics}
	D.~Tseluiko and D.~Papageorgiou.
	\newblock Dynamics of an electrostatically modified
	{K}uramoto-{S}ivashinsky--{K}orteweg--de {V}ries equation arising in falling
	film flows.
	\newblock {\em Physical Review E}, 82(1):016322, 2010.
	
	\bibitem{tseluiko2006global}
	D.~Tseluiko and D.~T. Papageorgiou.
	\newblock A global attracting set for nonlocal {K}uramoto-{S}ivashinsky
	equations arising in interfacial electrohydrodynamics.
	\newblock {\em European Journal of Applied Mathematics}, 17(6):677--703, 2006.
	
	\bibitem{tseluiko2006wave}
	D.~Tseluiko and D.~T. Papageorgiou.
	\newblock Wave evolution on electrified falling films.
	\newblock {\em Journal of Fluid Mechanics}, 556:361--386, 2006.
	
\end{thebibliography}

\bigskip

\begin{description}
	\item[J. He] Universit\'e de Lyon, Universit\'e Lyon 1 --
	CNRS UMR 5208 Institut Camille Jordan --
	43 bd. du 11 Novembre 1918 --
	Villeurbanne Cedex F-69622, France.\\
	Email: \texttt{jiao.he@math.univ-lyon1.fr}
	\item[R. Granero-Belinch\'{o}n] Departamento  de  Matem\'aticas,  Estad\'istica  y  Computaci\'on,  Universidad  de Cantabria.  Avda.  Los  Castros  s/n,  Santander,  Spain.\\
	Email: \texttt{rafael.granero@unican.es}
\end{description}

\end{document}